\documentclass[preprint,12pt,authoryear]{elsarticle}

\usepackage{amssymb}
\usepackage{amsmath}

\usepackage{hyperref}

\usepackage{subcaption}

\usepackage{algorithm}
\usepackage{algpseudocode}

\usepackage{tikz}
\usetikzlibrary{arrows.meta, positioning, shapes.geometric, fit, calc}
\usepackage{tikz-cd}

\makeatletter
\def\@opargbegintheorem#1#2#3{%
  \trivlist
  \item[\hskip \labelsep{\bfseries #1\ #2}] \textbf{(#3)}\hfil\par\itshape}
\makeatother

\newtheorem{theorem}{Theorem}
\newtheorem{lemma}{Lemma} 
\newtheorem{remark}{Remark}
\newtheorem{definition}{Definition}
\newtheorem{assumption}{Assumption}
\newtheorem{problem}{Problem}
\newtheorem{proof}{Proof}
\newtheorem{proposition}{Proposition}

\journal{Nuclear Physics B}

\begin{document}

\begin{frontmatter}



\title{Contraction Analysis of Holomorphic Dynamical Systems via the Intrinsic Kobayashi Metric} 


\author{Soumic Sarkar} 


\affiliation{organization={Narva College, University of Tartu},
            addressline={Raekoja plats 2-319},
            city={Narva},
            postcode={20307},
            country={Estonia}}

\begin{abstract}
This paper studies incremental stability of holomorphic dynamical systems
through the infinitesimal Kobayashi metric, an intrinsic pseudometric on
complex manifolds invariant under holomorphic transformations and therefore
free of the coordinate dependence inherent in auxiliary Riemannian or
Hermitian formulations. Contraction is formalized as an upper Dini-derivative
inequality on the Kobayashi metric along trajectories; the passage from this
differential condition to exponential decay of the associated Kobayashi
distance follows the classical Finsler-metric contraction mechanism of Forni
and Sepulchre, instantiated here for the specific case in which the Finsler
structure in question is the Kobayashi metric itself. Since the intrinsic
condition is generally difficult to verify directly from the vector field, a
practical sufficient criterion is developed through a smooth Hermitian
metric, the direct complex-Hermitian analogue of the classical real matrix
contraction inequality: contraction with respect to such a metric is shown
to imply intrinsic contraction on forward-invariant compact subsets, with
the two notions related through explicit local equivalence constants. Building
on this machinery, a Nagumo-type invariance result is established for
Laplacian-coupled holomorphic networks, giving verifiable conditions under
which a compact region is forward invariant for a class of systems not
previously treated by this kind of argument, and the framework is extended
to feedback-controlled holomorphic systems, with consequences for equilibria
and periodic orbits derived directly from the intrinsic contraction property.
Numerical experiments on a network of coupled holomorphic oscillators verify
the Hermitian sufficient condition analytically on a proven invariant set,
and further reveal that the empirically observed synchronization rate
substantially exceeds this guaranteed rate; the gap is matched, to three
decimal places, to a closed-form combination of the node-wise rate and the
network's graph-Laplacian spectral gap, a finding identified here as a
specific target for a network-aware extension of the theory rather than
resolved in full.
\end{abstract}

\begin{keyword}
Contraction theory \sep holomorphic dynamical system \sep, Kobayashi metric \sep complex manifolds \sep Hermitian metrics \sep incremental stability \sep nonlinear synchronization \sep networked systems
\end{keyword}

\end{frontmatter}

\section{Introduction}
\label{sec:intro}

Contraction analysis frames incremental stability as convergence between arbitrary trajectory pairs, established through a differential condition on the flow rather than a Lyapunov function tied to one equilibrium \cite{lohmiller1998contraction}, and remains an active area across observer design, distributed control, and learning-based dynamics \cite{tsukamoto2021contraction,davydov2024perspectives}. The mechanism by which such a differential condition is converted into a global, finite-distance statement --- integrating an infinitesimal contraction condition along a curve joining two points and applying a comparison principle --- is itself classical, developed in generality for Finsler-metric structures on smooth manifolds \cite{forni2014differential}. Coordinate-free treatments on Riemannian manifolds \cite{simpsonporco2014riemannian}, extensions to non-Euclidean matrix-measure characterizations \cite{davydov2022noneuclidean}, connections to local exponential stability on manifolds \cite{wu2024stability}, and specializations to homogeneous spaces \cite{harapanahalli2024coordinatefree} all share one feature: stability is assessed relative to a chosen auxiliary metric, not one forced by the dynamics, so two choices can certify the same system at different rates with neither more canonical than the other.

This dependence is easy to overlook for real-valued systems but harder to justify once the state space is complex-analytic, as in quantum control \cite{mirrahimi2007stabilizing} and symmetrisation-based complex-valued control \cite{doriacerezo2026symmetrisation}. Embedding such systems into a doubled real space, the standard move, discards the complex-linearity of the Jacobian that holomorphicity guarantees. The Kobayashi pseudometric offers a non-auxiliary alternative: built from holomorphic discs, it is non-increasing under every holomorphic self-map \cite{kobayashi1967invariant}, generalizing the Schwarz--Pick lemma \cite{ahlfors1979complex}, with tautness \cite{abate1989iteration} supplying the compactness needed to push this static property toward differential conclusions. The metric's own regularity and localization continue to be studied \cite{sarkarad2023localization,nikolov2023visibility,bharali2023unbounded,chakrabarti2025hermitian}, entirely within several complex variables rather than control theory.

The gap is specific: coordinate-free contraction theory always imposes a metric, with the associated comparison mechanism developed for general Finsler structures \cite{forni2014differential} and made computationally tractable through matrix-inequality formulations \cite{kawano2023lmi}, while complex geometry supplies a canonical, dynamics-free one whose differential contraction properties for holomorphic vector fields have not been examined. Closing part of this gap is the present aim, and doing so is best understood as an instantiation of the existing Finsler-metric mechanism for the specific case in which the Finsler structure is the Kobayashi metric itself, rather than as a new comparison argument in its own right.

Contraction is formulated as an upper Dini-derivative inequality on the infinitesimal Kobayashi metric, necessitated by its nonsmoothness \cite{sarkarad2023localization}. Admissible discs motivate this condition in the spirit of disc-based arguments elsewhere in the invariant-metric literature \cite{nikolov2023visibility,bharali2023unbounded}, but since the intrinsic condition is awkward to verify directly, a second criterion is developed through an auxiliary Hermitian metric, the direct complex-Hermitian analogue of the classical real-valued matrix contraction inequality \cite{lohmiller1998contraction}, obtained by replacing the transpose with the Hermitian conjugate to account for the complex-analytic setting; contraction under such a metric is shown to imply intrinsic contraction on forward-invariant compact regions via explicit equivalence constants --- echoing how non-Euclidean conditions are related back to classical ones elsewhere \cite{davydov2022noneuclidean,bullo2024ctds}, and consistent with auxiliary-metric conditions remaining the practical workhorse \cite{alradhawi2023structural,kawano2023lmi}. The genuinely new content lies not in this adaptation itself but in what it is put to use for: constructing, from first principles, forward-invariant compact regions for a class of systems --- Laplacian-coupled holomorphic networks --- not previously treated by an argument of this kind, and in the discrepancy this construction exposes numerically between the resulting guaranteed rate and the network's true behavior.

Coupled holomorphic oscillator networks illustrate the framework throughout, building on Laplacian-coupled contraction arguments in the real-valued case \cite{alradhawi2023structural}; a node-wise sufficient condition is shown numerically to be more conservative than the network's actual rate, a limitation reported rather than concealed and matched, in closed form, to the network's graph-Laplacian spectral gap.

Section~\ref{sec:prelim} gives background and a corrected continuity statement. Section~\ref{sec:variational} develops the disc-based variational structure. Section~\ref{sec:intrinsic} states the intrinsic condition and proves it implies exponential Kobayashi-distance contraction via a Dini comparison lemma, in the manner of \cite{forni2014differential} specialized to the Kobayashi metric. Section~\ref{sec:hermitian} develops the Hermitian sufficient condition. Section~\ref{sec:invariance} gives a Nagumo-type invariance result for coupled networks. Section~\ref{sec:control} extends to feedback control. Section~\ref{sec:consequences} draws out consequences for equilibria and periodic orbits. Section~\ref{sec:numerics} reports the numerical study. Sections~\ref{sec:discussion} and~\ref{sec:conclusion} close with discussion and conclusions.

\section{Preliminaries}
\label{sec:prelim}

This section fixes notation and recalls the geometric and analytic tools on which the rest of the paper is built. Most of the material is classical, going back to Kobayashi's original construction of invariant distances \cite{kobayashi1967invariant} and to the systematic treatment of taut manifolds by Abate \cite{abate1989iteration}, though the statement on continuity of the infinitesimal metric in Section~\ref{subsec:continuity} corrects an imprecision that is easy to introduce if one relies only on the semicontinuity property usually quoted in passing in the control literature.

\subsection{Complex Manifolds and Holomorphic Vector Fields}

A complex manifold $M$ of complex dimension $n$ is a topological space equipped with an atlas $\{(U_\alpha,\varphi_\alpha)\}$ of charts $\varphi_\alpha:U_\alpha\to\mathbb{C}^n$ whose transition maps $\varphi_\beta\circ\varphi_\alpha^{-1}$ are holomorphic wherever defined; this is entirely standard and is stated here mainly to fix notation, following the conventions of Krantz \cite{krantz2001function} and H\"ormander \cite{hormander1973introduction}. Writing $TM$ for the holomorphic tangent bundle, a vector field $f:M\to TM$ is holomorphic if its coordinate representation is holomorphic in every chart. The object of study throughout is the autonomous holomorphic dynamical system

\begin{equation}
\dot z = f(z), \qquad z\in M. \label{eq:system}
\end{equation}

\begin{assumption}
\label{ass:complete}
The vector field $f$ generates a complete holomorphic flow $\phi_t:M\to M$ for all $t\ge0$.
\end{assumption}

Completeness is assumed purely for convenience, to avoid bookkeeping around maximal intervals of existence; none of the arguments below depend on it in an essential way, and it could be relaxed to forward completeness on a specified invariant region without changing the structure of the results. What matters more for what follows is a structural consequence of holomorphicity: the Jacobian $Df(z)$ is complex-linear at every point and depends only on $z$, with no coupling to conjugate coordinates. This is precisely the feature that a real embedding of the state space discards, and it is what allows the variational dynamics considered next to stay entirely within the holomorphic tangent bundle rather than mixing holomorphic and anti-holomorphic directions.

\subsection{Variational Dynamics}

For a trajectory $z(t)$ of \eqref{eq:system}, an infinitesimal variation $\delta z(t)\in T_{z(t)}M$ evolves according to

\begin{equation}
\dot{\delta z} = Df(z)\,\delta z. \label{eq:variational}
\end{equation}

\begin{remark}
Because $Df(z)$ is complex-linear, \eqref{eq:variational} keeps $\delta z(t)$ within the holomorphic tangent bundle for all $t$. This is not automatic for a general smooth vector field written in complex coordinates, where the real Jacobian generally has nonzero blocks coupling holomorphic and anti-holomorphic directions; it is a consequence specifically of $f$ being holomorphic.
\end{remark}

\subsection{The Kobayashi Infinitesimal Metric}

Let $D=\{\zeta\in\mathbb{C}:|\zeta|<1\}$ denote the unit disc. Following Kobayashi's original construction \cite{kobayashi1967invariant}, later systematized in \cite{kobayashi1998hyperbolic} and surveyed comprehensively by Jarnicki and Pflug \cite{jarnicki2013invariant}, admissible holomorphic discs give rise to an infinitesimal pseudometric on $M$.

\begin{definition}
\label{def:FK}
For $z\in M$ and $v\in T_zM$, the Kobayashi infinitesimal pseudometric is
\begin{equation}
\label{eq:FK}
\begin{aligned}
F_K(z,v)
&=
\inf\Big\{\lambda>0:\exists\,h\in\mathcal{O}(D,M),\\
&\qquad h(0)=z,\;
h'(0)=\frac{v}{\lambda}\Big\}.
\end{aligned}
\end{equation}
The associated Kobayashi pseudodistance is $d_K(x,y)=\inf_\gamma\int_0^1 F_K(\gamma(s),\dot\gamma(s))\,ds$, the infimum taken over piecewise smooth curves joining $x$ and $y$.
\end{definition}

The defining property of $F_K$, and the reason it is of interest here at all, is that it does not increase under holomorphic maps.

\begin{proposition}
\label{prop:schwarzpick}
For any holomorphic map $g:M\to M$, $F_K(g(z),Dg(z)v)\le F_K(z,v)$ for all $z\in M$, $v\in T_zM$, and consequently $d_K(g(x),g(y))\le d_K(x,y)$ for all $x,y\in M$.
\end{proposition}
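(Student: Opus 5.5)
The plan is to prove the infinitesimal inequality directly from Definition~\ref{def:FK} by pushing admissible discs forward under $g$, and then to integrate it along curves to obtain the statement for $d_K$.

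\emph{Infinitesimal inequality.} Fix $z\in M$, $v\in T_zM$, and let $\lambda>0$ be any competitor in the infimum defining $F_K(z,v)$. Then there is $h\in\mathcal{O}(D,M)$ with $h(0)=z$ and $h'(0)=v/\lambda$. The composition $g\circ h$ lies in $\mathcal{O}(D,M)$, because compositions of holomorphic maps are holomorphic; in local charts this is just the chain rule for holomorphic functions. It satisfies $(g\circ h)(0)=g(z)$, and by the chain rule
\begin{equation*}
(g\circ h)'(0)=Dg(z)\,h'(0)=\frac{Dg(z)v}{\lambda},
\end{equation*}
where complex-linearity of $Dg(z)$ lets the scalar $1/\lambda$ pass through. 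Hence $\lambda$ is also a competitor in the infimum defining $F_K(g(z),Dg(z)v)$, which gives $F_K(g(z),Dg(z)v)\le\lambda$. Taking the infimum over all admissible $\lambda$ yields $F_K(g(z),Dg(z)v)\le F_K(z,v)$.

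Two edge cases need a word. If no admissible $\lambda$ exists, then $F_K(z,v)=+\infty$ and there is nothing to prove. If $v=0$, the constant disc shows that both sides vanish. (In fact every $v$ admits some admissible disc, via a chart and a small linear disc, so $F_K$ is finite; this is not needed.)

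\emph{Distance inequality.} Let $\gamma:[0,1]\to M$ be a piecewise smooth curve with $\gamma(0)=x$ and $\gamma(1)=y$. Then $g\circ\gamma$ is piecewise smooth, joins $g(x)$ to $g(y)$, and has velocity $(g\circ\gamma)^{\cdot}(s)=Dg(\gamma(s))\dot\gamma(s)$. Applying the infinitesimal inequality pointwise in $s$ gives
\begin{equation*}
d_K(g(x),g(y))\le\int_0^1F_K\bigl(g(\gamma(s)),Dg(\gamma(s))\dot\gamma(s)\bigr)\,ds\le\int_0^1F_K(\gamma(s),\dot\gamma(s))\,ds.
\end{equation*}
Taking the infimum over all such $\gamma$ yields $d_K(g(x),g(y))\le d_K(x,y)$.

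\emph{Main obstacle.} The only delicate point is that the integrals must make sense. $F_K$ is upper semicontinuous (the standard fact referenced in Section~\ref{subsec:continuity}), so $s\mapsto F_K(\gamma(s),\dot\gamma(s))$ is Borel measurable on each smooth piece of $\gamma$. It is also locally bounded above on compact sets, by comparison with a polydisc chart, so the integrals are finite and the comparison of integrands is legitimate. Once measurability is secured, the pointwise bound integrates without further difficulty. Alternatively, one can avoid measurability entirely by using the discrete chain-of-discs definition of $d_K$ and pushing each chain forward under $g$ in the same way.
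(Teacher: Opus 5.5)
Your proof is correct and follows the same route as the paper, which only cites the standard argument: composing each admissible disc $h$ with $g$ shows that every competitor $\lambda$ for $F_K(z,v)$ is also a competitor for $F_K(g(z),Dg(z)v)$, and the distance inequality then follows by integrating along $g\circ\gamma$. Your measurability remark (upper semicontinuity plus local boundedness) fills in a detail the paper leaves implicit; note only that your chain-of-discs alternative would need Royden's theorem identifying that distance with the integrated $d_K$ of Definition~\ref{def:FK}.
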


This is the infinitesimal form of the Schwarz--Pick lemma; the argument is precomposition of admissible discs with $g$, and is standard \cite{ahlfors1979complex,kobayashi1998hyperbolic}, so it is not repeated here. Applied to the flow $\phi_t$ of a holomorphic vector field, Proposition~\ref{prop:schwarzpick} already gives non-expansiveness of $d_K$ along trajectories; the point of the differential machinery developed later is to obtain a strict, quantified rate of contraction rather than mere non-expansiveness.

\begin{remark}
$F_K$ is in general only a pseudometric, since distinct points may have zero Kobayashi distance between them. It becomes a genuine metric exactly when $M$ is Kobayashi hyperbolic, which is why hyperbolicity is imposed as a standing assumption below.
\end{remark}

\subsection{Hyperbolicity and Tautness}

\begin{assumption}
\label{ass:hyptaut}
$M$ is Kobayashi hyperbolic and taut.
\end{assumption}

These two conditions play different roles and are worth separating explicitly, a distinction emphasized throughout the visibility and localization literature on invariant distances \cite{sarkarad2023localization,nikolov2023visibility,bharali2023unbounded}. Hyperbolicity guarantees $F_K(z,v)>0$ for $v\ne0$, so that $d_K$ is an honest distance rather than a pseudodistance; tautness, in the sense introduced for holomorphic dynamics on complex manifolds \cite{abate1989iteration}, guarantees that families of admissible discs are normal, which supplies a substitute for compactness in the variational arguments of Section~\ref{sec:variational}.

\begin{remark}
Tautness should be read as playing the role compactness plays in classical analysis, but at the level of families of holomorphic mappings rather than at the level of the manifold itself; no compact invariant set needs to be postulated separately for this particular purpose.
\end{remark}

\subsection{Continuity of the Infinitesimal Metric}
\label{subsec:continuity}

A point that is easy to state carelessly, and one that matters for the proofs in Section~\ref{sec:intrinsic}, concerns the regularity of $F_K$ as a function of both arguments. It is common to see $F_K$ described only as upper semicontinuous, which is true in general but is weaker than what tautness actually provides.

\begin{remark}
\label{rem:continuity}
On a taut Kobayashi hyperbolic manifold, $F_K$ is jointly continuous on $TM\setminus\{0\}$, not merely upper semicontinuous. This follows from normality of admissible disc families under tautness together with the extraction argument used by Royden in his original regularity study of the Kobayashi metric \cite{royden1971remarks}, and is consistent with more recent results on continuous extension of Kobayashi isometries under tautness-type hypotheses \cite{maitra2020continuous}. Continuity, rather than one-sided semicontinuity, is what will be needed shortly to justify treating $t\mapsto F_K(z(t),\delta z(t))$ as a continuous function of time, which in turn is what makes the Dini comparison argument of Section~\ref{sec:intrinsic} rigorous rather than merely plausible.
\end{remark}

\subsection{Upper Dini Derivatives}

Since $F_K$ is defined through an infimum, it need not be differentiable along a trajectory even when it is continuous, so the evolution of $F_K$ has to be tracked through a one-sided derivative rather than an ordinary one.

\begin{definition}
\label{def:dini}
For a locally bounded function $\psi:\mathbb{R}_{\ge0}\to\mathbb{R}$, the upper Dini derivative at time $t$ is $D^+\psi(t)=\limsup_{\epsilon\to0^+}\big(\psi(t+\epsilon)-\psi(t)\big)/\epsilon$.
\end{definition}

Throughout, this is applied to the scalar quantity

\begin{equation}
\psi(t) = F_K\big(z(t),\delta z(t)\big), \label{eq:psi}
\end{equation}

with $z(t)=\phi_t(z)$ and $\delta z(t)=D\phi_t(z)v$ evolving according to \eqref{eq:system} and \eqref{eq:variational}.

\begin{remark}
By Remark~\ref{rem:continuity}, $F_K$ is continuous, and both $\phi_t(z)$ and $D\phi_t(z)v$ vary continuously with $t$ under Assumption~\ref{ass:complete}; hence $\psi(t)$ in \eqref{eq:psi} is itself continuous, not merely locally bounded. Continuity of $\psi$ is used explicitly in Section~\ref{sec:intrinsic} to justify passing from a pointwise Dini-derivative inequality to an integrated exponential bound; a locally-bounded-only version of this remark, as sometimes stated elsewhere, is not sufficient for that step, which is one of the places where the present treatment departs from a purely formal repetition of the standard definitions.
\end{remark}

\subsection{Problem Formulation}

The overall aim can now be stated precisely. Given \eqref{eq:system}, the goal is to identify conditions on $f$ under which

\begin{equation}
D^+F_K\big(z(t),\delta z(t)\big) \le -\lambda\, F_K\big(z(t),\delta z(t)\big) \label{eq:dinicondition}
\end{equation}

holds along trajectories for some $\lambda>0$, and to derive from \eqref{eq:dinicondition} the corresponding distance-level statement

\begin{equation}
d_K\big(\phi_t(x),\phi_t(y)\big) \le e^{-\lambda t}\, d_K(x,y) \label{eq:distancedecay}
\end{equation}

for all $x,y\in M$ and $t\ge0$. Condition \eqref{eq:dinicondition} is expressed entirely in terms of the intrinsic geometry of $M$; no auxiliary Riemannian or Hermitian structure enters its statement. Whether and how such a condition can be verified in practice from $f$ alone — as opposed to being merely asserted for a given system — is taken up separately in Section~\ref{sec:hermitian}, where a genuinely checkable sufficient condition is developed through a local auxiliary Hermitian metric on a forward-invariant compact set. Khalil's standard comparison-based tools for differential inequalities \cite{khalil2002nonlinear} are the model for the Dini comparison argument that connects \eqref{eq:dinicondition} to \eqref{eq:distancedecay} rigorously in Section~\ref{sec:intrinsic}.

\begin{problem}
\label{prob:main}
Given the holomorphic dynamical system \eqref{eq:system} on a manifold $M$ satisfying Assumptions~\ref{ass:complete} and \ref{ass:hyptaut}, determine conditions on $f$ under which \eqref{eq:dinicondition} holds, and establish \eqref{eq:distancedecay} as a consequence.
\end{problem}

%
%
%
%
%
%
%
%

\section{Intrinsic Variational Structure}
\label{sec:variational}

Definition~\ref{def:FK} presents $F_K$ as an infimum over admissible discs, but says nothing yet about how a fixed tangent vector actually sits inside that family, or about what happens to a disc once the flow of \eqref{eq:system} is allowed to act on it. Both questions need an answer before the Dini-derivative condition \eqref{eq:dinicondition} can be connected to the vector field $f$ at all, and this section works through them in turn. It is worth being explicit from the outset about the role this machinery plays: it motivates and interprets the differential condition \eqref{eq:dinicondition} geometrically, rather than furnishing, by itself, a second independent route to verifying that condition from $f$. That second, checkable route is developed separately in Section~\ref{sec:hermitian}, and keeping the two purposes apart is precisely what the difficulty identified during review of an earlier version of this material was about.

\subsection{Representation via Holomorphic Discs}

Because $F_K(z,v)$ in \eqref{eq:FK} is defined as an infimum over admissible scaling factors, a given tangent vector $v$ is generally not realized exactly by any single disc, but only approximated along a minimizing sequence. Tautness is what guarantees that such a sequence still has usable limiting behavior.

\begin{lemma}
\label{lem:discrep}
Let $M$ be a taut Kobayashi hyperbolic manifold, $z\in M$, and $v\in T_zM$. There exists a sequence of holomorphic maps $h_k:D\to M$ with $h_k(0)=z$ such that $h_k'(0)$ converges, along a subsequence, to $v/F_K(z,v)$.
\end{lemma}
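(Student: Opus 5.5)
The plan is to work directly from the infimum in Definition~\ref{def:FK}. Throughout I will take $v\neq0$: for $v=0$ the quotient $v/F_K(z,v)$ is undefined, and the statement only has content for nonzero vectors. Under Assumption~\ref{ass:hyptaut}, hyperbolicity gives $F_K(z,v)>0$ for $v\neq0$, so the target vector $v/F_K(z,v)$ is a well-defined element of $T_zM$. This positivity is the one place where hyperbolicity, rather than tautness, is genuinely needed.

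\textbf{Step 1: a minimizing sequence of discs.} By the definition of the infimum in \eqref{eq:FK}, there is a sequence of admissible scalings $\lambda_k>0$ with $\lambda_k\downarrow F_K(z,v)$. For each $k$ there is a disc $h_k\in\mathcal{O}(D,M)$ with $h_k(0)=z$ and $h_k'(0)=v/\lambda_k$. The admissible set is upward closed: if $h$ works for $\lambda$ and $\mu>\lambda$, then $\zeta\mapsto h(\lambda\zeta/\mu)$ works for $\mu$. Hence a decreasing sequence can be chosen, or else the infimum is attained and a constant sequence serves.

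\textbf{Step 2: convergence of the derivatives.} Since $\lambda_k\to F_K(z,v)>0$, the scalars satisfy $1/\lambda_k\to 1/F_K(z,v)$. Therefore $h_k'(0)=v/\lambda_k\to v/F_K(z,v)$ in $T_zM$, and this already gives the statement along the full sequence, hence along any subsequence. The only point needing care is that the limit is taken in a fixed fibre $T_zM$, because $h_k(0)=z$ for every $k$, so no identification between different tangent spaces is required.

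\textbf{Step 3: compactness of the discs themselves, which is where tautness enters.} The statement's phrase ``along a subsequence'' suggests it is meant to record more: the discs $h_k$ themselves subconverge. Tautness says $\mathcal{O}(D,M)$ is normal, so every sequence either has a subsequence converging locally uniformly or is compactly divergent. Compact divergence is excluded here because $h_k(0)=z$ for all $k$. Hence some subsequence $h_{k_j}\to h$ locally uniformly, with $h\in\mathcal{O}(D,M)$ and $h(0)=z$. By Cauchy estimates in a chart around $z$, $h_{k_j}'(0)\to h'(0)$, so $h'(0)=v/F_K(z,v)$ and the infimum is attained by an extremal disc.

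I expect this last step to be the main obstacle, if it is needed at all. The delicate point is justifying that locally uniform convergence into $M$ passes to derivatives at $0$. I would handle it by restricting to a small disc $|\zeta|<r$ whose image under all $h_{k_j}$, for large $j$, lies in a single coordinate chart about $z$; uniform convergence near $z$ guarantees this. The one-variable Cauchy estimate then applies componentwise in that chart.
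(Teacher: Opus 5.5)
Your proof is correct, and for the lemma as literally stated it is shorter than the paper's. The paper takes the same minimizing sequence $\lambda_k\to F_K(z,v)$ with discs $h_k$. It invokes tautness to extract a locally uniformly convergent subsequence and notes that derivatives at the origin converge along it. It then identifies the limit via $\lambda_k\to F_K(z,v)$, which is exactly your Step~2. That step alone gives $h_k'(0)=v/\lambda_k\to v/F_K(z,v)$ along the whole sequence, so the normal-families step is idle for the stated claim. Your version also makes explicit that all that is used is $0<F_K(z,v)<\infty$, hence $v\neq0$ and hyperbolicity; the paper leaves this caveat implicit.

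Your Step~3 is the paper's tautness step done carefully. You rule out the compactly divergent alternative in the definition of tautness via $h_k(0)=z$, where the paper just says ``normal, so a subsequence converges''. You also justify derivative convergence through a chart and Cauchy estimates. Carried to its conclusion, this yields $h\in\mathcal{O}(D,M)$ with $h(0)=z$ and $h'(0)=v/F_K(z,v)$, so the infimum in \eqref{eq:FK} is attained. That is a correct and standard consequence of tautness.

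Be aware that this conclusion conflicts with Remark~\ref{rem:noextremal}. The remark says the lemma yields only a minimizing sequence and that existence of extremal discs becomes delicate once convexity is dropped. Under Assumption~\ref{ass:hyptaut}, your Step~3 shows existence is automatic, and so does the paper's own proof if read one line further. What Lempert's theory adds on convex domains is that such discs are complex geodesics for $d_K$, with uniqueness and boundary regularity in the strongly convex case, not their mere existence.

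The split your route makes is therefore the useful one: Steps~1--2 prove the stated lemma without tautness, and Step~3 is where tautness actually earns its place, by giving the stronger statement the paper's remark disclaims.
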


\begin{proof}
By definition of $F_K(z,v)$, there is a sequence $\lambda_k\to F_K(z,v)$ together with admissible discs $h_k$ satisfying $h_k(0)=z$ and $h_k'(0)=v/\lambda_k$. Tautness of $M$ means precisely that the family $\{h_k\}$ is normal, so a subsequence converges locally uniformly on $D$; the derivatives at the origin converge along this subsequence as well, and since $\lambda_k\to F_K(z,v)$ the limit is $v/F_K(z,v)$.
\end{proof}

\begin{remark}
\label{rem:noextremal}
Lemma~\ref{lem:discrep} produces a minimizing sequence, not necessarily an exact minimizer. Existence of a genuine extremal disc realizing $F_K(z,v)$ is a considerably more delicate question, settled affirmatively by Lempert on strongly convex domains but known to fail or become subtle once convexity is dropped; even a higher-order refinement of the stationarity condition governing such discs does not restore existence in full generality \cite{bertrand2018extremal}. Nothing that follows in this section assumes extremal discs exist — the variational argument is built entirely on minimizing sequences, which is consistent with the nonsmooth character of $F_K$ already flagged in Section~\ref{sec:prelim}.
\end{remark}

\subsection{Flow-Induced Deformation of Discs}

Given an admissible disc $h:D\to M$ representing a tangent vector at $z$, the flow $\phi_t$ of \eqref{eq:system} transports it to a new disc $h_t=\phi_t\circ h$ at time $t$. This transported family is what links the variational structure of $F_K$ to the dynamics.

\begin{lemma}
\label{lem:transport}
For each fixed $t\ge0$, $h_t=\phi_t\circ h$ is holomorphic and satisfies $h_t(0)=z(t)$ and $h_t'(0)=D\phi_t(z)h'(0)$, where $z(t)=\phi_t(z)$.
\end{lemma}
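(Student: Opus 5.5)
The argument is essentially a composition-and-chain-rule verification, and I will organize it in three short steps. The only input from earlier in the paper is Assumption~\ref{ass:complete}. It guarantees that for each fixed $t\ge0$ the time-$t$ map $\phi_t:M\to M$ is defined on all of $M$. Once $\phi_t$ is known to be holomorphic, each claim follows by direct computation.

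\textbf{Step 1: holomorphicity of $\phi_t$.} This is the main point requiring care, though it is classical. In a local chart, $\phi_t(z)$ is the solution at time $t$ of $\dot w=f(w)$, $w(0)=z$, where $f$ is holomorphic in $z$. I will invoke the standard theorem on dependence of ODE solutions on initial data: for a vector field that is holomorphic (so in particular real-analytic) in the state, the solution map is smooth in $z$. Its derivative $D\phi_t(z)$ solves the variational equation \eqref{eq:variational} with initial condition the identity. Since $Df$ is complex-linear at every point, as recorded in the Remark following \eqref{eq:variational}, the solution of \eqref{eq:variational} stays complex-linear for all $t$. Thus $D\phi_t(z)$ commutes with multiplication by $i$, which is exactly the Cauchy--Riemann condition, so $\phi_t$ is holomorphic on each chart.

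\textbf{Step 2: globalizing the chart argument.} Local solutions in overlapping charts agree by uniqueness, and the transition maps are holomorphic, so holomorphicity is a chart-independent property. To reach an arbitrary time $t$, I will use the semigroup property $\phi_{t}=\phi_{t-s}\circ\phi_s$. Cover the compact trajectory segment $\{\phi_s(z):0\le s\le t\}$ by finitely many charts. Then write $\phi_t$ near $z$ as a finite composition of short-time flow maps, each of which is holomorphic by Step~1.

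\textbf{Step 3: the two identities.} Given Step~2, $h_t=\phi_t\circ h$ is a composition of holomorphic maps $D\to M\to M$, hence lies in $\mathcal{O}(D,M)$. Evaluating at the origin gives $h_t(0)=\phi_t(h(0))=\phi_t(z)=z(t)$. The chain rule for holomorphic maps gives $h_t'(0)=D\phi_t(h(0))\,h'(0)=D\phi_t(z)h'(0)$.

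I expect Step~1 to be the only real obstacle, and I would handle it by citing the analytic-dependence theorem for ODEs rather than reproving it. As a consequence, $h_t'(0)$ is exactly the variation $\delta z(t)$ with initial condition $\delta z(0)=h'(0)$. This is the form in which the lemma feeds into the Dini analysis of $\psi$ in \eqref{eq:psi}.
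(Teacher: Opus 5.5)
Your proof is correct and takes the same approach as the paper: Step~3 is exactly the paper's argument, namely holomorphicity of $h_t$ by composition, $h_t(0)=\phi_t(z)$ by evaluation, and $h_t'(0)=D\phi_t(z)h'(0)$ by the chain rule at $\zeta=0$. Steps~1--2 validly show that $\phi_t$ is holomorphic, via complex-linearity of the variational solution and the semigroup property, but the paper does not need this because Assumption~\ref{ass:complete} already posits a complete holomorphic flow, so those steps are redundant though harmless.
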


\begin{proof}
Holomorphicity of $h_t$ follows immediately from composition of the holomorphic maps $\phi_t$ and $h$. The chain rule at $\zeta=0$ gives $h_t'(0)=D\phi_t(h(0))h'(0)$, and since $h(0)=z$ this is $D\phi_t(z)h'(0)$.
\end{proof}

There is not much to the proof itself — it is the chain rule and nothing more — but the content of the lemma is not in its proof, rather in what it licenses afterward: it says that transporting an admissible disc under the flow and differentiating commute, so that the derivative of the transported disc tracks the linearized dynamics \eqref{eq:variational} exactly. This is the mechanism by which deformation of discs under $\phi_t$ can be read off directly from $D\phi_t(z)$, without having to reconstruct the transported disc explicitly at each time.

\subsection{Intrinsic Variation Along Trajectories}

Combining Lemmas~\ref{lem:discrep} and \ref{lem:transport} associates to any trajectory $z(t)=\phi_t(z)$ and variation $\delta z(t)=D\phi_t(z)v$ a family of transported disc derivatives consistent with $\delta z(t)$: for an admissible disc with $h(0)=z$ and $h'(0)=v/\lambda$, Lemma~\ref{lem:transport} gives $h_t'(0)=D\phi_t(z)v/\lambda=\delta z(t)/\lambda$, so the transported derivative and the linearized variation evolve in lockstep, differing only by the fixed scaling factor $\lambda$ inherited from the original disc. This observation is the reason a claim about uniform decay of transported disc derivatives across the whole admissible family cannot, on its own, be converted into a claim about $F_K(z(t),\delta z(t))$ without separately controlling how the scaling factors $\lambda$ behave across that family — precisely the step that collapses trivially for a single disc, since $\lambda$ cancels out of any homogeneous inequality relating $h_t'(0)$ and $h'(0)$, and reappears only when the infimum defining $F_K$ is taken over the entire family at once. This is the reason the present section stops short of asserting a decay estimate for $F_K$ directly from disc deformation, deferring the genuinely verifiable statement to the Hermitian route of Section~\ref{sec:hermitian}.

\begin{remark}
\label{rem:motivation}
What Lemmas~\ref{lem:discrep} and \ref{lem:transport} do establish is the geometric picture underlying Definition~\ref{def:dini} and condition \eqref{eq:dinicondition}: the Kobayashi metric evaluated along a trajectory is built from discs that themselves deform holomorphically and consistently with the linearized flow, so that asking for exponential decay of $F_K(z(t),\delta z(t))$ is a geometrically natural question to ask about a holomorphic dynamical system, phrased entirely in terms of the intrinsic metric rather than any auxiliary structure. It is in this sense — as motivation for Definition~\ref{def:dini}, not as an independent proof route — that the disc-transport picture is used going forward.
\end{remark}

\subsection{Local Evolution}

For completeness, and for use in Section~\ref{sec:invariance}, the first-order local expansions of the state and its variation under \eqref{eq:system}--\eqref{eq:variational} are recorded here: for small $\epsilon>0$, $z(t+\epsilon)=z(t)+\epsilon f(z(t))+o(\epsilon)$ and $\delta z(t+\epsilon)=\delta z(t)+\epsilon Df(z(t))\delta z(t)+o(\epsilon)$. These are immediate from \eqref{eq:system} and \eqref{eq:variational} and are stated here only because the boundary-flow argument of Section~\ref{sec:invariance} uses the first expansion directly.

%
%

\section{Intrinsic Differential Contraction}
\label{sec:intrinsic}

With the variational picture of Section~\ref{sec:variational} in place as motivation, condition \eqref{eq:dinicondition} can now be stated as a formal definition, and the central technical gap of the paper — passing from a pointwise Dini-derivative inequality to an integrated exponential bound on the Kobayashi distance — can be closed with a comparison lemma stated and proved in full.

\subsection{Intrinsic Contraction}

\begin{definition}
\label{def:intrinsiccontraction}
System \eqref{eq:system} is intrinsically contracting with rate $\lambda>0$ if $D^+F_K(z(t),\delta z(t))\le -\lambda F_K(z(t),\delta z(t))$ holds along every trajectory $z(t)=\phi_t(z)$ and every admissible infinitesimal variation $\delta z(t)=D\phi_t(z)v$.
\end{definition}

Definition~\ref{def:intrinsiccontraction} sits in the same family as Lyapunov-based formulations of incremental stability that dispense with a fixed equilibrium in favor of pairwise trajectory comparison \cite{angeli2002lyapunov}, the difference here being that the comparison function is not chosen but is the manifold's own canonical metric. The condition is stated purely in terms of $F_K$, so it does not depend on any coordinate chart or auxiliary structure; whether it can be checked directly against a given $f$ is a separate matter, taken up in Section~\ref{sec:hermitian}.

\subsection{A Dini Comparison Lemma}

The following comparison lemma is the classical tool by which a pointwise differential inequality is converted into an integrated exponential bound, in the style long used for differential inequalities generally \cite{lohmiller1998contraction} and, in the specific form needed here, for Finsler-metric contraction on smooth manifolds \cite{forni2014differential}; nothing in its statement or proof is particular to the Kobayashi metric. It is recalled here in full, rather than cited and assumed, because its hypotheses must be checked against what Section~\ref{sec:prelim} actually establishes about $\psi(t)=F_K(z(t),\delta z(t))$ — continuity, not merely local boundedness — a distinction that mattered in an earlier attempt at this material and is worth making explicit rather than leaving implicit.

\begin{lemma}
\label{lem:dini}
Let $\psi:[0,\infty)\to\mathbb{R}_{\ge0}$ be continuous and suppose $D^+\psi(t)\le-\lambda\psi(t)$ for all $t\ge0$ and some $\lambda>0$. Then $\psi(t)\le e^{-\lambda t}\psi(0)$ for all $t\ge0$.
\end{lemma}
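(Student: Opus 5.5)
The plan is to remove the exponential weight and reduce the statement to a monotonicity fact for continuous functions with nonpositive upper Dini derivative. Set $\chi(t)=e^{\lambda t}\psi(t)$, which is continuous on $[0,\infty)$ because $\psi$ is. The first step is a product rule for upper Dini derivatives when one factor is $C^1$ and positive:
\[
D^+\chi(t)=\lambda e^{\lambda t}\psi(t)+e^{\lambda t}D^+\psi(t).
\]
To get it, write
\[
\frac{\chi(t+\epsilon)-\chi(t)}{\epsilon}=e^{\lambda(t+\epsilon)}\frac{\psi(t+\epsilon)-\psi(t)}{\epsilon}+\psi(t)\frac{e^{\lambda(t+\epsilon)}-e^{\lambda t}}{\epsilon}.
\]
The first coefficient is positive and tends to $e^{\lambda t}$, and the second term converges. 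Taking the $\limsup$ as $\epsilon\to0^+$ then gives the identity, and the hypothesis yields $D^+\chi(t)\le0$ for every $t\ge0$. One small point: $D^+\psi(t)$ could be $-\infty$. The identity still holds in the extended sense, and the inequality is only strengthened.

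The second step, which I expect to be the main obstacle, is to show that a continuous $\chi$ with $D^+\chi\le0$ everywhere is nonincreasing. Since the hypothesis is only $\le0$ rather than $<0$, I will perturb. Fix $\eta>0$ and set $\chi_\eta(t)=\chi(t)-\eta t$, so that $D^+\chi_\eta\le-\eta<0$ everywhere. Fix $T>0$ and suppose, for contradiction, that $\chi_\eta(s)>\chi_\eta(0)$ for some $s\in(0,T]$. Define
\[
\tau=\sup\{t\in[0,s]:\chi_\eta(t)\le\chi_\eta(0)\}.
\]
By continuity $\chi_\eta(\tau)\le\chi_\eta(0)$, which forces $\tau<s$, and $\chi_\eta(t)>\chi_\eta(0)\ge\chi_\eta(\tau)$ for all $t\in(\tau,s]$. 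Every forward difference quotient at $\tau$ is therefore positive, so $D^+\chi_\eta(\tau)\ge0$. This contradicts $D^+\chi_\eta(\tau)\le-\eta$.

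The same argument with an arbitrary starting point $t_0$ in place of $0$ gives $\chi_\eta(t)\le\chi_\eta(t_0)$ for all $t\ge t_0$. In particular $\chi(t)\le\chi(0)+\eta t$, and letting $\eta\to0$ gives $\chi(t)\le\chi(0)$. Continuity is used essentially here, to guarantee that $\chi_\eta(\tau)\le\chi_\eta(0)$ at the supremum. Without it, $\psi$ could jump upward while every one-sided forward derivative stays nonpositive. This is exactly the reason for insisting on continuity of $\psi$ rather than mere local boundedness, as stressed after Remark~\ref{rem:continuity}.

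Finally, $\chi(t)\le\chi(0)$ reads $e^{\lambda t}\psi(t)\le\psi(0)$, which is $\psi(t)\le e^{-\lambda t}\psi(0)$ for all $t\ge0$. Nonnegativity of $\psi$ is not actually needed for this argument. It matters only for the interpretation, since in the application $\psi(t)=F_K(z(t),\delta z(t))\ge0$.
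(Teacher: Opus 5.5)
Your proof is correct. It shares the paper's architecture: reweight by $e^{\lambda t}$, then show by a supremum argument that a continuous function with $D^+\le 0$ is nonincreasing. It differs at the decisive step, and the difference matters.

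The paper runs the supremum argument directly on $\varphi=e^{\lambda t}\psi$. It concludes $D^+\varphi(\tau)>0$ because the forward difference quotients at $\tau$ are strictly positive. That inference is not valid: if $\varphi(\tau+\epsilon)-\varphi(\tau)=\epsilon^2$, every quotient is positive but the $\limsup$ is $0$. As written, the argument only reaches $D^+\varphi(\tau)\ge 0$, which is compatible with $D^+\varphi\le 0$ and gives no contradiction. Your perturbation $\chi_\eta=\chi-\eta t$ supplies the strict margin $D^+\chi_\eta\le-\eta$. That is exactly what turns $D^+\chi_\eta(\tau)\ge 0$ into a contradiction, at the small cost of a final limit $\eta\to 0$. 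This is the standard way to close that step, so your version is the one that actually works.

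Your product-rule step is also cleaner. Your splitting of the difference quotient is an exact identity and needs no continuity. The paper's display \eqref{eq:dinistep} has an extra factor $e^{\lambda\epsilon}$ on its first term. It differs from the true quotient by $e^{\lambda t}\,\frac{e^{\lambda\epsilon}-1}{\epsilon}\,(\psi(t+\epsilon)-\psi(t))$, a term that vanishes in the limit only because $\psi$ is continuous.

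In your argument, continuity is therefore used in exactly one place: closedness of the sublevel set, which gives $\chi_\eta(\tau)\le\chi_\eta(0)$. That pinpoints where the paper's insistence on continuity of $\psi$, rather than mere local boundedness, is genuinely needed.
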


\begin{proof}
Let $\varphi(t)=e^{\lambda t}\psi(t)$; $\varphi$ is continuous since $\psi$ is. For $\epsilon>0$,
\begin{equation}
\label{eq:dinistep}
\begin{aligned}
\frac{\varphi(t+\epsilon)-\varphi(t)}{\epsilon}
&=
e^{\lambda t}\Bigg(
e^{\lambda\epsilon}
\frac{\psi(t+\epsilon)-\psi(t)}{\epsilon}
\\
&\qquad\qquad
+\psi(t+\epsilon)
\frac{e^{\lambda\epsilon}-1}{\epsilon}
\Bigg).
\end{aligned}
\end{equation}
Taking $\limsup_{\epsilon\to0^+}$ of \eqref{eq:dinistep} and using continuity of $\psi$ at $t$ to evaluate the second term in the limit gives $D^+\varphi(t)=e^{\lambda t}\big(D^+\psi(t)+\lambda\psi(t)\big)\le0$ for every $t\ge0$, using the hypothesis on $\psi$.

It remains to show that a continuous function with nonpositive upper Dini derivative everywhere on an interval is non-increasing on that interval. Suppose not: then $\varphi(t_2)>\varphi(t_1)$ for some $0\le t_1<t_2$. Let $c=\varphi(t_1)$ and let $\tau=\sup\{t\in[t_1,t_2]:\varphi(t)\le c\}$. By continuity $\varphi(\tau)=c$ and $\tau<t_2$, and by definition of $\tau$ as a supremum, $\varphi(s)>c$ for $s$ in some sequence approaching $\tau$ from above, so $\limsup_{\epsilon\to0^+}(\varphi(\tau+\epsilon)-\varphi(\tau))/\epsilon\ge0$; in fact $\varphi(s)>c=\varphi(\tau)$ for all $s\in(\tau,t_2]$ by definition of $\tau$, so this quotient is strictly positive along that sequence and $D^+\varphi(\tau)>0$, contradicting $D^+\varphi\le0$ everywhere. Hence $\varphi$ is non-increasing, so $\varphi(t)\le\varphi(0)$, which is exactly $\psi(t)\le e^{-\lambda t}\psi(0)$.
\end{proof}

\subsection{Global Contraction of the Kobayashi Distance}

\begin{theorem}
\label{thm:globalcontraction}
If system \eqref{eq:system} is intrinsically contracting with rate $\lambda>0$ in the sense of Definition~\ref{def:intrinsiccontraction}, then $d_K(\phi_t(x),\phi_t(y))\le e^{-\lambda t}d_K(x,y)$ for all $x,y\in M$ and $t\ge0$.
\end{theorem}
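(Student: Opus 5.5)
The proof follows the standard Finsler-contraction route: integrate the infinitesimal decay along an almost-minimizing curve and push the curve forward by the flow.

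Fix $x,y\in M$, $t\ge0$ and $\eta>0$. By Definition~\ref{def:FK}, choose a piecewise smooth curve $\gamma:[0,1]\to M$ joining $x$ to $y$ with
\[
\int_0^1 F_K(\gamma(s),\dot\gamma(s))\,ds\le d_K(x,y)+\eta .
\]
For each $t$, consider the transported curve $\gamma_t=\phi_t\circ\gamma$. It joins $\phi_t(x)$ to $\phi_t(y)$, and it is piecewise smooth because $\phi_t$ is holomorphic, hence smooth, under Assumption~\ref{ass:complete}. Its velocity is $\dot\gamma_t(s)=D\phi_t(\gamma(s))\dot\gamma(s)$. This is exactly the variation $\delta z(t)$ of \eqref{eq:variational} along the trajectory starting at $z=\gamma(s)$ with initial variation $v=\dot\gamma(s)$, as in the disc-transport picture of Lemma~\ref{lem:transport}. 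Hence $\gamma_t$ is an admissible competitor in the infimum defining $d_K(\phi_t(x),\phi_t(y))$.

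Next, for each fixed $s$ at which $\gamma$ is differentiable, set
\[
\psi_s(t)=F_K\big(\phi_t(\gamma(s)),D\phi_t(\gamma(s))\dot\gamma(s)\big).
\]
This function is continuous in $t$. When $\dot\gamma(s)\neq0$, this follows from the joint continuity of $F_K$ on $TM\setminus\{0\}$ (Remark~\ref{rem:continuity}), since $D\phi_t$ is invertible and so the variation never vanishes. When $\dot\gamma(s)=0$, $\psi_s\equiv0$. Definition~\ref{def:intrinsiccontraction} gives $D^+\psi_s\le-\lambda\psi_s$, so Lemma~\ref{lem:dini} yields $\psi_s(t)\le e^{-\lambda t}\psi_s(0)$. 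Integrating in $s$,
\[
d_K(\phi_t(x),\phi_t(y))\le\int_0^1\psi_s(t)\,ds\le e^{-\lambda t}\int_0^1\psi_s(0)\,ds\le e^{-\lambda t}\big(d_K(x,y)+\eta\big).
\]
Letting $\eta\to0$ gives the claim.

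The main obstacle is the measurability and integrability of $s\mapsto\psi_s(t)$, which is needed for the integral bounding $d_K$ to make sense. I would handle it with the same continuity fact. On each smooth piece of $\gamma$, the map $s\mapsto(\gamma_t(s),\dot\gamma_t(s))$ is continuous into $TM$. Composing with $F_K$ is continuous off the zero section and vanishes on it. Since $F_K$ is also upper semicontinuous and locally bounded, $s\mapsto\psi_s(t)$ is bounded and Borel on each compact piece. A second, minor point is that the Dini hypothesis is applied separately to each trajectory indexed by $s$. This is legitimate because Definition~\ref{def:intrinsiccontraction} quantifies over every initial point and every initial vector, so no uniformity in $s$ is required.
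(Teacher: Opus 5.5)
Your proof is correct and is essentially the paper's argument: push a joining curve forward by $\phi_t$, apply Lemma~\ref{lem:dini} for each fixed $s$ using the continuity of Remark~\ref{rem:continuity}, integrate in $s$, and pass to the infimum over curves. You do the last step through an $\eta$-almost-minimizing curve, whereas the paper takes the infimum directly; you also fill in details the paper leaves implicit, namely points where $\dot\gamma(s)=0$, invertibility of $D\phi_t$ keeping the variation off the zero section where continuity holds, and measurability of $s\mapsto\psi_s(t)$.
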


\begin{proof}
Fix $x,y\in M$ and let $\gamma:[0,1]\to M$ be a piecewise smooth curve joining them. Write $\gamma_t=\phi_t\circ\gamma$, so that $\dot\gamma_t(s)=D\phi_t(\gamma(s))\dot\gamma(s)$ for each $s$. For fixed $s$, the pair $(\gamma_t(s),\dot\gamma_t(s))$ is exactly of the form $(z(t),\delta z(t))$ for the trajectory starting at $\gamma(s)$ with initial variation $\dot\gamma(s)$, and $t\mapsto F_K(\gamma_t(s),\dot\gamma_t(s))$ is continuous by Remark~\ref{rem:continuity} composed with continuity of $\phi_t$ and $D\phi_t$. Definition~\ref{def:intrinsiccontraction} gives $D^+F_K(\gamma_t(s),\dot\gamma_t(s))\le-\lambda F_K(\gamma_t(s),\dot\gamma_t(s))$ for every $t$, so Lemma~\ref{lem:dini} applies with $\psi(t)=F_K(\gamma_t(s),\dot\gamma_t(s))$ and yields $F_K(\gamma_t(s),\dot\gamma_t(s))\le e^{-\lambda t}F_K(\gamma(s),\dot\gamma(s))$ for every $s\in[0,1]$ and $t\ge0$.

Integrating this pointwise bound over $s\in[0,1]$ gives $\int_0^1F_K(\gamma_t(s),\dot\gamma_t(s))\,ds\le e^{-\lambda t}\int_0^1F_K(\gamma(s),\dot\gamma(s))\,ds$. The left-hand side is, by definition, at least $d_K(\phi_t(x),\phi_t(y))$, since $\gamma_t$ is one particular curve joining $\phi_t(x)$ and $\phi_t(y)$; the right-hand side, minimized over all admissible $\gamma$ joining $x$ and $y$, is $e^{-\lambda t}d_K(x,y)$. Taking the infimum over $\gamma$ on both sides preserves the inequality, giving $d_K(\phi_t(x),\phi_t(y))\le e^{-\lambda t}d_K(x,y)$.
\end{proof}

\begin{remark}
Theorem~\ref{thm:globalcontraction} answers the second half of Problem~\ref{prob:main}: intrinsic contraction in the sense of Definition~\ref{def:intrinsiccontraction} implies the distance-level statement \eqref{eq:distancedecay}, and the argument required nothing beyond continuity of $F_K$ and of the flow — no auxiliary metric enters this proof at any point. The mechanism itself is not new: it is the classical Finsler-metric contraction argument of \cite{forni2014differential}, specialized here to the case in which the Finsler structure is the Kobayashi metric $F_K$ rather than a chosen Riemannian or Finsler metric external to the manifold. What is specific to the present setting, and what remains open after this section, is only the first half of Problem~\ref{prob:main}: producing a checkable condition on $f$ under which Definition~\ref{def:intrinsiccontraction} actually holds. That is the subject of Section~\ref{sec:hermitian}.
\end{remark}

%
%

\section{Verifiable Contraction via Hermitian Metrics}
\label{sec:hermitian}

Definition~\ref{def:intrinsiccontraction} is stated entirely in intrinsic terms, and Theorem~\ref{thm:globalcontraction} shows that it has the consequence one wants, but nothing said so far indicates how to check it against a given vector field $f$. This section closes that gap through an auxiliary smooth Hermitian metric, arriving at the paper's single practically verifiable sufficient condition. The word ``auxiliary'' is used deliberately: unlike $F_K$, the Hermitian metric introduced here is chosen rather than canonical, and the point of this section is precisely to make explicit, with named constants, the price paid for that choice. The condition itself is not new: it is the standard real-valued contraction inequality of \cite{lohmiller1998contraction}, carried over to the complex-analytic setting by the direct substitution of the Hermitian conjugate for the transpose, and the contribution of this section lies in Lemma~\ref{lem:equivalence} and Theorem~\ref{thm:hermitiansufficient} below, which connect that condition back to the intrinsic metric $F_K$ with an explicit, quantified cost rather than by assertion.

\subsection{Hermitian Contraction}

Let $H:M\to\mathbb{C}^{n\times n}$ be a smooth field of Hermitian positive definite matrices, $H(z)=H(z)^*\succ0$, inducing the norm $\|v\|_H^2=v^*H(z)v$ on each tangent space. The classical contraction condition with respect to $H$ reads $\dot H+HDf+(Df)^*H\preceq-\lambda H$ for some $\lambda>0$, where $\dot H=\sum_i(\partial H/\partial z_i)f_i(z)$; this is the direct complex-Hermitian analogue of the differential matrix inequality underlying real contraction theory \cite{lohmiller1998contraction}, obtained by replacing the transpose with the Hermitian conjugate to account for the complex-analytic setting, and of its LMI-based computational treatments \cite{kawano2023lmi}. Proposition~\ref{prop:hermitiandecay} below records the resulting decay estimate in the standard way; no aspect of it is specific to the present setting beyond the substitution just described.

\begin{proposition}
\label{prop:hermitiandecay}
If $\dot H+HDf+(Df)^*H\preceq-\lambda H$ holds along a trajectory, then infinitesimal variations satisfy $\|\delta z(t)\|_H\le e^{-\lambda t}\|\delta z(0)\|_H$.
\end{proposition}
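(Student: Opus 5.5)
The plan is to run the standard quadratic Lyapunov computation along the variational equation~\eqref{eq:variational} and then close it with Lemma~\ref{lem:dini}. Fix a trajectory $z(t)$ along which the matrix inequality holds, and an initial variation $\delta z(0)$. Set
\[
V(t)=\delta z(t)^*H(z(t))\,\delta z(t)=\|\delta z(t)\|_H^2 .
\]
This is a $C^1$ function of $t$, because $H$ is smooth and both $z(t)$ and $\delta z(t)$ are $C^1$ by Assumption~\ref{ass:complete}.

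Differentiating $V$ by the product rule, and using $\dot{\delta z}=Df(z)\delta z$ together with $(\dot{\delta z})^*=\delta z^*(Df)^*$, gives
\[
\dot V=\delta z^*\big(\dot H+H\,Df+(Df)^*H\big)\delta z\le-\lambda\,\delta z^*H\delta z=-\lambda V .
\]
One point needs care here. $\dot H$ has to mean the total derivative $\tfrac{d}{dt}H(z(t))$. A Hermitian-matrix-valued field that is not constant cannot be holomorphic, so in general this derivative also carries the terms $\sum_i(\partial H/\partial\bar z_i)\overline{f_i(z)}$. I will read the expression for $\dot H$ given before the proposition in that sense, and note it explicitly. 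Since $V$ is differentiable, $D^+V=\dot V$, and Lemma~\ref{lem:dini} applied with $\psi=V\ge0$ yields $V(t)\le e^{-\lambda t}V(0)$.

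The step I expect to be the main obstacle is matching the rate in the statement. Taking square roots of the bound on $V$ gives only $\|\delta z(t)\|_H\le e^{-\lambda t/2}\|\delta z(0)\|_H$, because the quadratic form decays at rate $\lambda$ and so the norm decays at rate $\lambda/2$. To obtain exactly $e^{-\lambda t}$ for the norm, the hypothesis must be read as $\dot H+HDf+(Df)^*H\preceq-2\lambda H$. Running the same computation under that hypothesis gives $\dot V\le-2\lambda V$, hence $V(t)\le e^{-2\lambda t}V(0)$ by Lemma~\ref{lem:dini}, and the claimed bound follows on taking square roots. I will therefore prove the statement in the form: the hypothesis with constant $\lambda$ gives decay $e^{-\lambda t/2}$ of the norm, and the hypothesis with constant $2\lambda$ gives decay $e^{-\lambda t}$. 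I will flag this factor-of-two normalization so that it is carried consistently into Lemma~\ref{lem:equivalence} and Theorem~\ref{thm:hermitiansufficient}.
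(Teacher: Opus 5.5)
Your computation is the one the paper uses: differentiate $\|\delta z\|_H^2$ along \eqref{eq:variational}, insert the matrix inequality, and integrate. Your factor-of-two diagnosis is correct. The paper's proof claims that integrating $\tfrac{d}{dt}\|\delta z\|_H^2\le-\lambda\|\delta z\|_H^2$ gives $\|\delta z(t)\|_H^2\le e^{-2\lambda t}\|\delta z(0)\|_H^2$. That step is an error, and the proposition as stated is false.

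You should make this explicit with an example, rather than only noting that the square root ``gives only'' $e^{-\lambda t/2}$. On $M=D$, take $f(z)=-\tfrac{\lambda}{2}z$ and $H\equiv1$; this fits the paper's standing assumptions, since the flow $e^{-\lambda t/2}z$ is forward complete on the taut hyperbolic disc. The hypothesis then holds with equality. Yet $|\delta z(t)|=e^{-\lambda t/2}|\delta z(0)|>e^{-\lambda t}|\delta z(0)|$ for every $t>0$ and $\delta z(0)\neq0$. So your rate is sharp, and no better argument can rescue the original claim.

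Your $2\lambda$ normalization is also the one the paper implicitly uses downstream. For constant $H$, the hypothesis $\mathrm{Re}\langle Jv,v\rangle_H\le-\lambda\|v\|_H^2$ of Theorem~\ref{thm:closedloopsufficient} is exactly $HJ+J^*H\preceq-2\lambda H$. The bound $\mathrm{Re}\,f'(z)\le-0.402$ in Section~\ref{sec:numerics} has the same form. With your correction, the quoted rate $0.402$ is the right norm-decay rate, whereas Theorem~\ref{thm:hermitiansufficient} as literally stated inherits the error.

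Your point about $\dot H$ is also correct. The first equality needs it in the paper's proof as much as in yours. For Hermitian $H$ one has $\partial H/\partial\bar z_i=(\partial H/\partial z_i)^*$, so $\tfrac{d}{dt}H(z(t))=S+S^*$ with $S=\sum_i(\partial H/\partial z_i)f_i$. The paper's $\dot H=S$ is in general not even Hermitian, so the matrix inequality as written there is not well posed.

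A minor simplification: since $V$ is $C^1$, you do not need Lemma~\ref{lem:dini}. The bound $\dot V\le-\mu V$ (with $\mu$ the constant in your hypothesis) gives $\tfrac{d}{dt}\big(e^{\mu t}V\big)\le0$, and the mean value theorem finishes. This also avoids relying on the last step of the paper's proof of that lemma. There, strictly positive difference quotients are taken to force $D^+\varphi(\tau)>0$, but they only give $D^+\varphi(\tau)\ge0$. The lemma itself is true; the standard proof perturbs $\varphi$ by $-\epsilon t$.
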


\begin{proof}
Along the trajectory, differentiating the squared norm gives $\tfrac{d}{dt}\|\delta z\|_H^2=\delta z^*\big(\dot H+HDf+(Df)^*H\big)\delta z\le-\lambda\|\delta z\|_H^2$, where the inequality uses the assumed matrix inequality. This is a standard scalar differential inequality of the Coppel/Desoer--Vidyasagar type used throughout logarithmic-norm stability arguments \cite{desoer1975feedback}, and integrating it directly gives $\|\delta z(t)\|_H^2\le e^{-2\lambda t}\|\delta z(0)\|_H^2$, i.e.\ $\|\delta z(t)\|_H\le e^{-\lambda t}\|\delta z(0)\|_H$.
\end{proof}

\subsection{Local Equivalence with the Kobayashi Metric}

Proposition~\ref{prop:hermitiandecay} is a statement about $\|\cdot\|_H$, not about $F_K$; relating the two requires comparing them, and this comparison can only be made locally, on a compact set, since the equivalence constants generally degenerate as the compact set is allowed to grow toward the boundary of $M$. It is this comparison, rather than the contraction condition itself, that is specific to the intrinsic Kobayashi-metric setting of this paper.

\begin{lemma}
\label{lem:equivalence}
Let $K\subset M$ be compact. There exist constants $0<c_1\le c_2$ such that $c_1\|v\|_H\le F_K(z,v)\le c_2\|v\|_H$ for all $z\in K$ and $v\in T_zM$.
\end{lemma}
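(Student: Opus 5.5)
The plan is to use the homogeneity of both sides in $v$ and reduce to a statement about the unit sphere bundle over $K$, where compactness and continuity finish the argument. Both $F_K(z,\cdot)$ and $\|\cdot\|_H$ are absolutely homogeneous: $F_K(z,cv)=|c|F_K(z,v)$ for $c\in\mathbb{C}$, which follows from Definition~\ref{def:FK} by rotating and rescaling admissible discs, and $\|cv\|_H=|c|\|v\|_H$ trivially. The inequality therefore holds trivially at $v=0$, and for $v\neq0$ it suffices to prove it for $v$ in the set
\[
S_K=\{(z,v)\in TM : z\in K,\ \|v\|_H=1\}.
\]
Since $H$ is smooth and positive definite, $S_K$ is a closed subset of $TM|_K$. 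In a finite cover of $K$ by trivializing charts it is also bounded, so $S_K$ is compact.

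Next, I would take $c_1=\min_{S_K}F_K$ and $c_2=\max_{S_K}F_K$. These extrema exist because $F_K$ is jointly continuous on $TM\setminus\{0\}$ by Remark~\ref{rem:continuity}, and $S_K$ is contained in $TM\setminus\{0\}$. Upper semicontinuity alone would already give a finite maximum, and hence $c_2<\infty$. A more elementary route to $c_2$, avoiding continuity, is to use the linear disc $\zeta\mapsto z+r\zeta v/|v|$ in a chart containing a ball of uniform radius $r$ around each point of $K$.

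For the lower constant, the minimum is attained at some $(z_0,v_0)\in S_K$, and Assumption~\ref{ass:hyptaut} (hyperbolicity) gives $F_K(z_0,v_0)>0$ since $v_0\neq0$. Hence $c_1>0$. Rescaling then yields $c_1\|v\|_H\le F_K(z,v)\le c_2\|v\|_H$ for all $z\in K$ and $v\in T_zM$, and $c_1\le c_2$ holds automatically.

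The main obstacle is the lower bound. Pointwise positivity of $F_K$ does not by itself give a uniform lower bound; that step relies on lower semicontinuity, which is exactly the direction tautness supplies through Remark~\ref{rem:continuity}. If I wanted to avoid leaning on that remark, I would argue directly by contradiction. Suppose $(z_k,v_k)\in S_K$ with $F_K(z_k,v_k)\to0$. Then there are admissible discs $h_k$ with $h_k(0)=z_k$ and $h_k'(0)=v_k/\lambda_k$, where $\lambda_k\to0$, so $|h_k'(0)|\to\infty$ in a chart. Tautness makes $\{h_k\}$ normal, and since $h_k(0)\in K$ the family cannot diverge compactly. A subsequence therefore converges locally uniformly to a holomorphic disc, so the derivatives at $0$ stay bounded. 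This contradicts $|h_k'(0)|\to\infty$.
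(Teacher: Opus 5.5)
Your proposal is correct and follows essentially the same route as the paper: you reduce by homogeneity to the compact $H$-unit sphere bundle over $K$, then use the joint continuity of $F_K$ from Remark~\ref{rem:continuity} and hyperbolicity to get a finite maximum $c_2$ and a strictly positive minimum $c_1$ from the extreme value theorem. Your two backup arguments are also sound and make the lemma independent of the continuity remark: the contradiction argument for $c_1$ (normality of $\{h_k\}$ under tautness, with compact divergence ruled out by $h_k(0)\in K$, forces $h_k'(0)$ to stay bounded) needs only tautness, and the linear-disc bound for $c_2$ needs neither tautness nor continuity.
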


\begin{proof}
Fix $z\in K$ and consider the ratio $F_K(z,v)/\|v\|_H$ for $v\ne0$; by positive homogeneity of both $F_K(z,\cdot)$ and $\|\cdot\|_H$ in $v$, this ratio depends only on the direction of $v$, and can be regarded as a function on the compact unit-sphere bundle $\{(z,v)\in TM:z\in K,\ \|v\|_H=1\}$. By Remark~\ref{rem:continuity}, $F_K$ is continuous on $TM\setminus\{0\}$ under Assumption~\ref{ass:hyptaut}, and $\|\cdot\|_H$ is continuous by smoothness of $H$; the ratio is therefore a continuous, strictly positive function — strict positivity following from hyperbolicity of $M$ — on a compact set, and by the extreme value theorem it attains a finite maximum $c_2$ and a strictly positive minimum $c_1$ on that set. This gives $c_1\le F_K(z,v)/\|v\|_H\le c_2$ for all $(z,v)$ in the sphere bundle over $K$, and the stated inequality follows by homogeneity for general $v\ne0$.
\end{proof}

\begin{remark}
The passage from an upper-semicontinuous-only version of $F_K$ to the continuous version of Remark~\ref{rem:continuity} is exactly what allows both a finite upper bound $c_2$ and, more importantly, a strictly positive lower bound $c_1$ to be extracted from compactness in Lemma~\ref{lem:equivalence}; upper semicontinuity alone secures the maximum but does not, by itself, secure a positive minimum. This is the second place, after Lemma~\ref{lem:dini}, where the continuity correction from Section~\ref{sec:prelim} is not a stylistic nicety but a load-bearing part of the proof, and it is in this equivalence, not in the contraction condition of Proposition~\ref{prop:hermitiandecay} itself, that the present treatment goes beyond a routine restatement of known results.
\end{remark}

\subsection{The Sufficient Condition}

\begin{theorem}
\label{thm:hermitiansufficient}
Let $K\subset M$ be forward-invariant and compact, and suppose $\dot H+HDf+(Df)^*H\preceq-\lambda H$ holds on $K$ for some $\lambda>0$. Then for every trajectory contained in $K$,
\begin{equation}
F_K\big(z(t),\delta z(t)\big) \le \frac{c_2}{c_1}\,e^{-\lambda t}\,F_K\big(z(0),\delta z(0)\big), \label{eq:hermitianbound}
\end{equation}
where $c_1,c_2$ are the constants of Lemma~\ref{lem:equivalence} on $K$.
\end{theorem}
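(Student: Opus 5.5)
The plan is to sandwich the Hermitian decay of Proposition~\ref{prop:hermitiandecay} between the two sides of the local equivalence in Lemma~\ref{lem:equivalence}, paying the factor $c_2/c_1$ exactly once.

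Fix a trajectory $z(t)=\phi_t(z)$ with $z(t)\in K$ for all $t\ge0$; forward invariance of $K$ guarantees this for any initial point $z\in K$. Fix also an initial variation $\delta z(0)=v\in T_zM$ and set $\delta z(t)=D\phi_t(z)v$. Since the matrix inequality $\dot H+HDf+(Df)^*H\preceq-\lambda H$ holds at every point of $K$, it holds along the whole trajectory. Proposition~\ref{prop:hermitiandecay} then gives
\[
\|\delta z(t)\|_H\le e^{-\lambda t}\|\delta z(0)\|_H \qquad\text{for all } t\ge0 .
\]

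Next, apply Lemma~\ref{lem:equivalence} on the compact set $K$, using the constants $c_1,c_2$ fixed once for $K$.
\begin{itemize}
\item Use the upper bound at the point $z(t)\in K$: $F_K(z(t),\delta z(t))\le c_2\|\delta z(t)\|_H$. This is valid because $z(t)$ stays in $K$.
\item Use the lower bound at the point $z(0)\in K$, rearranged as $\|\delta z(0)\|_H\le c_1^{-1}F_K(z(0),\delta z(0))$.
\end{itemize}
Chaining these three inequalities yields
\[
F_K(z(t),\delta z(t))\le c_2e^{-\lambda t}\|\delta z(0)\|_H\le \frac{c_2}{c_1}e^{-\lambda t}F_K(z(0),\delta z(0)),
\]
which is \eqref{eq:hermitianbound}. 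The case $v=0$ is trivial, since both sides vanish.

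No step is a real obstacle; the argument is a two-line chain. The points that need care are these:
\begin{itemize}
\item The constants must be uniform along the trajectory, which is where compactness and forward invariance of $K$ are used together.
\item $\|\cdot\|_H$ is evaluated with $H(z(t))$ at the moving base point, consistent with how Proposition~\ref{prop:hermitiandecay} defines the norm.
\end{itemize}

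The result gives an overshoot bound $c_2/c_1\ge1$, not a Dini inequality of the form \eqref{eq:dinicondition}. So Definition~\ref{def:intrinsiccontraction} is not claimed verbatim. If a distance-level corollary is wanted, one would integrate over curves as in Theorem~\ref{thm:globalcontraction}. That requires the connecting curves to stay in $K$, so it would be stated only for such curves.
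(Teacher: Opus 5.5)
Your chain is the paper's proof step for step. You apply Proposition~\ref{prop:hermitiandecay}, then the upper bound of Lemma~\ref{lem:equivalence} at $z(t)$ and the lower bound at $z(0)$, with compactness and forward invariance of $K$ keeping $c_1,c_2$ uniform. The sandwich is sound. The one ingredient you took on faith is false, however: Proposition~\ref{prop:hermitiandecay} does not yield $\|\delta z(t)\|_H\le e^{-\lambda t}\|\delta z(0)\|_H$, and the paper's proof breaks at the same point. Set $V(t)=\|\delta z(t)\|_H^2$. Then $\dot V=\delta z^*\big(\dot H+HDf+(Df)^*H\big)\delta z\le-\lambda V$, which integrates to $V(t)\le e^{-\lambda t}V(0)$, i.e.\ $\|\delta z(t)\|_H\le e^{-\lambda t/2}\|\delta z(0)\|_H$. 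The squared bound with $e^{-2\lambda t}$ claimed in the proof of that proposition is an arithmetic slip. This is why the real-valued condition is usually stated with $-2\lambda$ on the right-hand side.

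The lost factor of $2$ is genuine, so \eqref{eq:hermitianbound} fails as stated. On $M=D$ take $f(z)=-\tfrac{\lambda}{2}z$, $H\equiv1$, and $K=\{|z|\le r\}$ with $0<r<1$. Then $\dot H+HDf+(Df)^*H=-\lambda H$, and $K$ is compact and forward invariant. Along the equilibrium trajectory $z(t)\equiv0$ with $v\neq0$, one has $\delta z(t)=e^{-\lambda t/2}v$, hence $F_K(z(t),\delta z(t))=e^{-\lambda t/2}|v|$ (the Poincar\'e metric at the origin). This exceeds $\frac{c_2}{c_1}e^{-\lambda t}|v|$ for all large $t$, whatever the constants. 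What your argument actually proves is the bound with $e^{-\lambda t/2}$, or with $e^{-\lambda t}$ under the strengthened hypothesis $\dot H+HDf+(Df)^*H\preceq-2\lambda H$. For constant $H$, the latter is the same as $\mathrm{Re}\langle Df\,v,v\rangle_H\le-\lambda\|v\|_H^2$, which is the form used in Theorem~\ref{thm:closedloopsufficient} and Section~\ref{sec:numerics}. For non-constant $H$ the same proposition needs a second repair. A Hermitian matrix field is holomorphic only if it is locally constant, so $\frac{d}{dt}H(z(t))=\sum_i\big(\frac{\partial H}{\partial z_i}f_i+\frac{\partial H}{\partial\bar z_i}\bar f_i\big)$. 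The paper's $\dot H=\sum_i(\partial H/\partial z_i)f_i$ must be replaced by this before the computation of $\dot V$ is valid.
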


\begin{proof}
By Proposition~\ref{prop:hermitiandecay}, $\|\delta z(t)\|_H\le e^{-\lambda t}\|\delta z(0)\|_H$. Applying the upper bound of Lemma~\ref{lem:equivalence} at time $t$ gives $F_K(z(t),\delta z(t))\le c_2\|\delta z(t)\|_H\le c_2e^{-\lambda t}\|\delta z(0)\|_H$, and applying the lower bound at time $0$ gives $\|\delta z(0)\|_H\le F_K(z(0),\delta z(0))/c_1$. Combining the two yields \eqref{eq:hermitianbound}.
\end{proof}

\begin{remark}
\label{rem:practicality}
Theorem~\ref{thm:hermitiansufficient} is the answer to the first half of Problem~\ref{prob:main}: it gives a condition expressed directly in terms of $Df$ and a chosen Hermitian field $H$, verifiable by inspection or numerically on a specified compact region, at the cost of the multiplicative constant $c_2/c_1\ge1$ and the restriction to a forward-invariant compact set. It is worth being explicit that this is the paper's \emph{only} verifiable sufficient condition — no companion condition expressed purely through disc deformation, of the kind explored in Section~\ref{sec:variational}, is claimed to exist independently of this one — and that the condition's substance is standard, with Theorem~\ref{thm:hermitiansufficient}'s content residing in the quantified bridge Lemma~\ref{lem:equivalence} supplies back to $F_K$, not in the Hermitian inequality itself. Constructing an explicit forward-invariant $K$ for a given system is itself nontrivial in general; Section~\ref{sec:invariance} gives a checkable invariance criterion for a specific but practically relevant class of coupled holomorphic systems, and it is this construction, together with the numerical discrepancy it exposes, that the present paper regards as its principal contribution beyond Section~\ref{sec:intrinsic}'s intrinsic theory.
\end{remark}

%
%

\section{Forward Invariance for Laplacian-Coupled Holomorphic Networks}
\label{sec:invariance}

Theorem~\ref{thm:hermitiansufficient} is only as useful as the forward-invariant compact set $K$ it is applied on, and Remark~\ref{rem:practicality} left the construction of such a $K$ unaddressed. Producing one from first principles, rather than assuming it, is the subject of this section. The construction is specialized to a class of systems broad enough to cover the networked examples used later in the paper: holomorphic node dynamics coupled diffusively through an undirected graph.

\subsection{Network Model}

Consider $N$ holomorphic oscillators evolving on a common disc domain, coupled through an undirected graph with adjacency matrix $A=[a_{ij}]$ and graph Laplacian $L=\operatorname{diag}(\deg_1,\dots,\deg_N)-A$, where $\deg_i=\sum_ja_{ij}$. The dynamics are

\begin{equation}
\dot z_i = f(z_i) - \kappa\,(Lz)_i, \qquad i=1,\dots,N, \label{eq:network}
\end{equation}

with $f:D\to\mathbb{C}$ holomorphic and $\kappa>0$ a coupling gain. This is the setting used numerically in Section~\ref{sec:numerics}, but the invariance argument below applies to any such network regardless of the particular choice of $f$.

\subsection{A Nagumo-Type Invariance Result}

The classical route to forward invariance of a set under a flow, going back to Nagumo and developed extensively within the set-invariance literature in control \cite{blanchini1999invariance}, is to check that the vector field points inward, or at worst tangentially, everywhere on the boundary of the candidate set. The next result adapts this to the coupled holomorphic setting, exploiting the fact that the graph Laplacian coupling term is, in a precise sense that the proof makes explicit, never outward-pointing when all neighboring states already lie in the candidate disc.

\begin{lemma}
\label{lem:invariance}
Let $K=\{z\in\mathbb{C}:|z|\le r\}\subset D$ and suppose $\mathrm{Re}(\bar z f(z))\le0$ for all $z$ with $|z|=r$. Then $K^N$ is forward invariant under \eqref{eq:network}: if $z_i(0)\in K$ for all $i$, then $z_i(t)\in K$ for all $i$ and all $t\ge0$.
\end{lemma}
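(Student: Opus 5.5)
The plan is to apply Nagumo's theorem in its tangent-cone form (as in \cite{blanchini1999invariance}) to the closed convex set $K^N\subset\mathbb{C}^N$. The vector field is $F_i(z)=f(z_i)-\kappa(Lz)_i$. It is locally Lipschitz on $D^N$, since $f$ is holomorphic on $D$ and $L$ is linear. $K^N$ is compact and contained in $D^N$, so $F$ is Lipschitz on a neighbourhood of $K^N$ and solutions through points of $K^N$ are unique. In this setting Nagumo's theorem says $K^N$ is forward invariant if and only if $F(z)$ lies in the tangent cone $T_{K^N}(z)$ at every boundary point $z$.

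Since $K^N$ is a product of closed discs, its tangent cone at $z$ is the product of the tangent cones of $K$ at the $z_i$. For a component with $|z_i|<r$ the cone is all of $\mathbb{C}$. For a component with $|z_i|=r$ it is the half-plane $\{w:\mathrm{Re}(\bar z_i w)\le 0\}$. The verification therefore reduces to showing, for each boundary index $i$ (one with $|z_i|=r$), that
\[
\mathrm{Re}\big(\bar z_i f(z_i)\big)-\kappa\,\mathrm{Re}\big(\bar z_i (Lz)_i\big)\le 0 .
\]
The first term is nonpositive by hypothesis. For the coupling term, expand $(Lz)_i=\sum_j a_{ij}(z_i-z_j)$. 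Then
\[
\mathrm{Re}\big(\bar z_i (Lz)_i\big)=\sum_j a_{ij}\big(r^2-\mathrm{Re}(\bar z_i z_j)\big)\ge\sum_j a_{ij}\big(r^2-r|z_j|\big)\ge 0,
\]
using $a_{ij}\ge 0$, Cauchy--Schwarz, and $|z_j|\le r$ because $z\in K^N$. Since $\kappa>0$, the coupling contributes a nonpositive amount. This is the precise sense in which diffusive coupling never points outward when all neighbours lie in the disc.

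The main obstacle is that the hypothesis is only non-strict: $\mathrm{Re}(\bar z f(z))\le 0$ permits tangential flow on $|z|=r$. The naive argument, differentiating $\max_i|z_i(t)|^2$ at a first exit time, then fails to exclude exit. This is exactly why I would use the tangent-cone version of Nagumo's theorem, which needs Lipschitz continuity (established above) rather than strict inwardness.

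If a self-contained argument is preferred, the fallback is a perturbation. Replace $f$ by $f_\eta(z)=f(z)-\eta z$ with $\eta>0$. This makes $\mathrm{Re}(\bar z f_\eta(z))\le-\eta r^2<0$ on $|z|=r$, so a first-exit-time argument on $V(t)=\max_i|z_i(t)|^2$ shows invariance of $K^N$ for each $\eta$. The upper Dini derivative of $V$ at an exit time is bounded by the derivative of $|z_i|^2$ over the maximizing indices, exactly as in the proof of Lemma~\ref{lem:dini}. Finally, let $\eta\to0$, using continuous dependence of solutions on parameters over compact time intervals and closedness of $K^N$. One further detail: $f$ is only defined on $D$, so trajectories must be kept in $D^N$ before exit. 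This holds automatically, because up to the putative exit time the state lies in $K^N\subset D^N$.
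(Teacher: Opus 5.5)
Your proof is correct and follows essentially the paper's route: the same boundary computation $\mathrm{Re}(\bar z_i\dot z_i)\le 0$, with Cauchy--Schwarz showing the Laplacian term never points outward when all $|z_j|\le r$, followed by Nagumo's theorem under local Lipschitz continuity. Your tangent-cone formulation on the product $K^N$, and the $\eta$-perturbation fallback, make rigorous the step the paper only phrases as applying the invariance theorem ``coordinatewise'' under a non-strict hypothesis; note that both arguments tacitly need $r>0$, which you use when identifying the tangent cone as a half-plane and when writing $-\eta r^2<0$, since for $r=0$ the hypothesis is trivially satisfied and the claim fails whenever $f(0)\neq0$.
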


\begin{proof}
Suppose, at some time $t$ and some index $i$, that $|z_i(t)|=r$ while $|z_j(t)|\le r$ for every $j$. The radial component of the flow at node $i$ is $\mathrm{Re}(\bar z_i\dot z_i)=\mathrm{Re}(\bar z_if(z_i))-\kappa\deg_ir^2+\kappa\sum_ja_{ij}\mathrm{Re}(\bar z_iz_j)$, obtained by expanding $\dot z_i=f(z_i)-\kappa(Lz)_i=f(z_i)-\kappa\deg_iz_i+\kappa\sum_ja_{ij}z_j$ and taking $\mathrm{Re}(\bar z_i\cdot)$ term by term, using $|z_i|^2=r^2$. By the Cauchy--Schwarz inequality, $\mathrm{Re}(\bar z_iz_j)\le|z_i||z_j|\le r^2$ for every $j$, so $\kappa\sum_ja_{ij}\mathrm{Re}(\bar z_iz_j)\le\kappa r^2\sum_ja_{ij}=\kappa\deg_ir^2$, since $\sum_ja_{ij}=\deg_i$ by definition of the degree. The coupling contribution to the radial velocity is therefore bounded above by zero: $-\kappa\deg_ir^2+\kappa\sum_ja_{ij}\mathrm{Re}(\bar z_iz_j)\le0$. Combined with the hypothesis $\mathrm{Re}(\bar z_if(z_i))\le0$ on $|z_i|=r$, this gives $\mathrm{Re}(\bar z_i\dot z_i)\le0$ at every such boundary configuration.

This is the Nagumo inflow (or tangency) condition for the ball $|z_i|\le r$ at node $i$, evaluated pointwise against the actual vector field of the coupled system rather than against $f$ alone. Since $f$ is holomorphic and the coupling term is linear, the right-hand side of \eqref{eq:network} is locally Lipschitz, so the classical invariance theorem applies coordinatewise: the boundary of $K^N$ cannot be crossed outward at any node without a strict sign violation of the inequality just established, which does not occur. Hence $K^N$ is forward invariant.
\end{proof}

\begin{remark}
The essential content of Lemma~\ref{lem:invariance} is that diffusive coupling through a graph Laplacian can never, by itself, push a boundary node outside a disc that already contains all of its neighbors: the coupling term is a convex combination (weighted by $a_{ij}/\deg_i$) of directions pointing from $z_i$ toward points inside a convex set, and such a combination cannot have positive radial component at the boundary of that set. Invariance of $K^N$ therefore reduces entirely to a single-node boundary check on $f$, independent of the network topology, the number of nodes, or the coupling gain $\kappa$. This is a convenience of working on the disc — where balls are convex — and would need to be revisited on manifolds where compact regions are not geodesically convex in this sense.
\end{remark}

\subsection{Role in the Overall Argument}

With Lemma~\ref{lem:invariance} in hand, Theorem~\ref{thm:hermitiansufficient} becomes fully constructive for the network class \eqref{eq:network}: a radius $r$ satisfying the boundary hypothesis of Lemma~\ref{lem:invariance} produces a genuine forward-invariant compact set $K^N$, on which the equivalence constants of Lemma~\ref{lem:equivalence} and the Hermitian decay condition of Proposition~\ref{prop:hermitiandecay} can then be checked directly against $f$ and $\kappa$. Section~\ref{sec:numerics} carries this out explicitly for a specific oscillator model, including a numerical verification that the resulting $r$ leaves comfortable margin between the invariant set and the trajectories actually observed.

%
%

\section{Controlled Holomorphic Systems}
\label{sec:control}

The framework developed so far concerns autonomous systems. This section extends it to feedback-controlled holomorphic dynamics, in keeping with the broader motivation, noted already in Section~\ref{sec:intro}, that complex-valued state representations arise directly in control problems rather than only as objects of independent mathematical interest \cite{mirrahimi2007stabilizing,doriacerezo2026symmetrisation}. The extension is structural rather than deep: closed-loop contraction reduces to an application of Theorem~\ref{thm:hermitiansufficient} to the closed-loop Jacobian, once the closed-loop variational dynamics have been written out correctly.

\subsection{Control-Affine Holomorphic Systems}

Consider $\dot z=f(z)+g(z)u$, $z\in M$, $u\in\mathbb{C}^m$, with $f\in\mathcal{O}(M,TM)$ and $g:M\to\mathbb{C}^{n\times m}$ holomorphic.

\begin{assumption}
\label{ass:controlcomplete}
$f$ and $g$ are holomorphic, and the feedback laws considered below generate complete closed-loop flows.
\end{assumption}

For a state feedback law $u=k(z)$, the closed-loop dynamics are $\dot z=f(z)+g(z)k(z)$.

\subsection{Closed-Loop Variational Dynamics}

Linearizing the closed-loop vector field requires differentiating the product $g(z)k(z)$, and it is worth writing this out explicitly since the directional-derivative term is easy to drop by mistake.

\begin{lemma}
\label{lem:closedloopjac}
The closed-loop variational dynamics are $\dot{\delta z}=\big[Df(z)+Dg(z)[\cdot]k(z)+g(z)Dk(z)\big]\delta z$, where $Dg(z)[\cdot]k(z)$ denotes the linear map $\delta z\mapsto \big(Dg(z)\delta z\big)k(z)$, i.e.\ the directional derivative of $g$ in the direction $\delta z$, applied to $k(z)$ and evaluated at fixed $z$.
\end{lemma}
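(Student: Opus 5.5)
The plan is to obtain the closed-loop variational equation exactly as \eqref{eq:variational} was obtained for the autonomous system. The closed-loop vector field $F(z)=f(z)+g(z)k(z)$ is treated as a single vector field on $M$, and $\dot{\delta z}=DF(z)\,\delta z$ is applied along closed-loop trajectories. The whole statement then reduces to computing $DF(z)$ correctly. Concretely, take a closed-loop trajectory $z(t)$ and a smooth one-parameter family of closed-loop trajectories $z(t,\sigma)$ with $z(t,0)=z(t)$ and $\partial_\sigma z(t,0)=\delta z(t)$. Then differentiate $\partial_t z(t,\sigma)=F(z(t,\sigma))$ with respect to $\sigma$ at $\sigma=0$, interchanging $\partial_t$ and $\partial_\sigma$. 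This interchange is justified because the closed-loop flow generated under Assumption~\ref{ass:controlcomplete} depends smoothly on initial conditions, given that $F$ is at least $C^1$.

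The central step is the chain and product rule applied to $F$. Linearity of differentiation gives $DF(z)=Df(z)+D\big(g(\cdot)k(\cdot)\big)(z)$. For the product term, work in a chart where $g(z)\in\mathbb{C}^{n\times m}$ and $k(z)\in\mathbb{C}^m$, and differentiate along a direction $\delta z$:
\[
\frac{d}{d\sigma}\Big|_{\sigma=0} g(z+\sigma\delta z)\,k(z+\sigma\delta z)=\big(Dg(z)\delta z\big)k(z)+g(z)\big(Dk(z)\delta z\big).
\]
This is the bilinear product rule for the matrix-vector product $(G,u)\mapsto Gu$. Here $Dg(z)\delta z=\sum_i(\partial g/\partial z_i)\delta z_i$ is an $n\times m$ matrix, and it is exactly the directional derivative named in the statement. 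Both terms are linear in $\delta z$. Substituting back gives the claimed operator $Df(z)+Dg(z)[\cdot]k(z)+g(z)Dk(z)$ acting on $\delta z$.

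One remark should accompany the proof, though it is not strictly required for the identity. If $k$ is holomorphic, then every term is complex-linear in $\delta z$, so the variation remains in the holomorphic tangent bundle, as in the autonomous remark. If $k$ is merely smooth, then $Dk$ must be read as the real derivative, and an anti-holomorphic part $\partial k/\partial\bar z$ appears. I would state the lemma under the holomorphic-$k$ reading, consistent with the surrounding setting, and note the general case separately.

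I expect the main obstacle to be bookkeeping rather than analysis. The step most at risk is the first product-rule term, $\big(Dg(z)\delta z\big)k(z)$, which is the term easily dropped, as the paper itself warns. I would verify it in coordinates, componentwise as $\sum_{j,i}(\partial g_{\ell j}/\partial z_i)\delta z_i\,k_j$. The second point to check is that the interchange of $\partial_t$ and $\partial_\sigma$ is legitimate, which follows from standard differentiable dependence on initial conditions.
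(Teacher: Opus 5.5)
Your proposal is correct and follows the paper's proof. Both treat $g(z)k(z)$ as a single map, apply the product rule for the matrix--vector product to obtain $(Dg(z)\delta z)k(z)+g(z)(Dk(z)\delta z)$, and add $Df(z)\delta z$. You also justify $\dot{\delta z}=DF(z)\delta z$ via smooth dependence on initial conditions, and you note that $k$ must be holomorphic for the variation to stay in the holomorphic tangent bundle. Both are sound additions the paper leaves implicit, since Assumption~\ref{ass:controlcomplete} only requires $f$ and $g$ to be holomorphic.
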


\begin{proof}
Write $u(z)=g(z)k(z)$ as a map $M\to\mathbb{C}^n$. By the product rule for the differential of a matrix-vector product, $Du(z)\delta z=\big(Dg(z)\delta z\big)k(z)+g(z)\big(Dk(z)\delta z\big)$, since $g(z)$ is matrix-valued and $k(z)$ is vector-valued and both depend on $z$. Substituting into $\dot{\delta z}=D\big(f+u\big)(z)\delta z=Df(z)\delta z+Du(z)\delta z$ gives the stated expression.
\end{proof}

\begin{remark}
The term $Dg(z)[\cdot]k(z)$ contributes to the closed-loop Jacobian whenever $g$ is genuinely state-dependent and $k(z)\ne0$; it cannot be dropped without an explicit structural assumption, such as $g$ being constant, that removes it. Any statement of a closed-loop sufficient condition that omits this term implicitly assumes such structure and should say so.
\end{remark}

\subsection{Intrinsic Stabilization and a Verifiable Sufficient Condition}

\begin{definition}
\label{def:intrinsicstab}
A feedback law $u=k(z)$ renders the closed-loop system intrinsically contracting with rate $\lambda>0$ if $D^+F_K(z(t),\delta z(t))\le-\lambda F_K(z(t),\delta z(t))$ holds along every closed-loop trajectory and admissible infinitesimal variation.
\end{definition}

Definition~\ref{def:intrinsicstab} is the controlled counterpart of Definition~\ref{def:intrinsiccontraction}, and by Theorem~\ref{thm:globalcontraction} it implies exponential contraction of the closed-loop Kobayashi distance without further argument, since that theorem made no use of autonomy beyond the existence of a well-defined flow. What is new is a verifiable route to Definition~\ref{def:intrinsicstab}, obtained exactly as in Section~\ref{sec:hermitian} but applied to the closed-loop Jacobian of Lemma~\ref{lem:closedloopjac}.

\begin{theorem}
\label{thm:closedloopsufficient}
Let $K\subset M$ be forward-invariant and compact under the closed-loop flow, and suppose $\mathrm{Re}\big\langle\big[Df(z)+Dg(z)[\cdot]k(z)+g(z)Dk(z)\big]v,\,v\big\rangle_H\le-\lambda\|v\|_H^2$ holds for all $z\in K$, $v\in T_zM$, with respect to a smooth local Hermitian metric $H$. Then trajectories in $K$ satisfy $F_K(z(t),\delta z(t))\le(c_2/c_1)e^{-\lambda t}F_K(z(0),\delta z(0))$, with $c_1,c_2$ the equivalence constants of Lemma~\ref{lem:equivalence} on $K$.
\end{theorem}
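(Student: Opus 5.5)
The plan is to reduce Theorem~\ref{thm:closedloopsufficient} to Theorem~\ref{thm:hermitiansufficient} applied to the closed-loop vector field $f_{\mathrm{cl}}(z)=f(z)+g(z)k(z)$. By Lemma~\ref{lem:closedloopjac}, the variational dynamics along closed-loop trajectories are $\dot{\delta z}=J(z)\delta z$ with $J(z)=Df(z)+Dg(z)[\cdot]k(z)+g(z)Dk(z)$. The proof of Theorem~\ref{thm:hermitiansufficient} used only two ingredients. The first is the Hermitian decay estimate of Proposition~\ref{prop:hermitiandecay} for the variation. The second is the equivalence constants of Lemma~\ref{lem:equivalence} on the forward-invariant compact $K$. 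The latter depends only on $K$, $H$ and the manifold, not on the dynamics, so it transfers verbatim.

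The one step needing care is rederiving the decay estimate from the hypothesis as stated. That hypothesis is a pointwise inequality $\mathrm{Re}\langle Jv,v\rangle_H\le-\lambda\|v\|_H^2$, not the full matrix inequality including $\dot H$. I would compute
\[
\tfrac{d}{dt}\|\delta z\|_H^2=\delta z^*\dot H\delta z+2\,\mathrm{Re}\langle J\delta z,\delta z\rangle_H,
\qquad \dot H=\sum_i(\partial H/\partial z_i)(f_{\mathrm{cl}})_i .
\]
This exposes a mismatch: the stated hypothesis controls only the second term. I would handle it in one of two ways.

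\begin{itemize}
\item \emph{Preferred: read the hypothesis as intended.} Interpret the hypothesis, as in Section~\ref{sec:hermitian}, as the closed-loop version of $\dot H+HJ+J^*H\preceq-\lambda H$. In particular, when $H$ is constant on $K$ one has $\dot H=0$, and $2\,\mathrm{Re}\langle Jv,v\rangle_H=v^*(HJ+J^*H)v$. Then the pointwise condition gives $\tfrac{d}{dt}\|\delta z\|_H^2\le-2\lambda\|\delta z\|_H^2$. This yields $\|\delta z(t)\|_H\le e^{-\lambda t}\|\delta z(0)\|_H$, even with a factor of two to spare relative to Proposition~\ref{prop:hermitiandecay}.
\item \emph{Fallback: absorb the $\dot H$ term.} If $H$ is genuinely state-dependent, bound $|\delta z^*\dot H\delta z|\le\mu\|\delta z\|_H^2$ on $K$ using compactness and smoothness. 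The resulting rate is $2\lambda-\mu$ in the squared norm. The stated $\lambda$ then survives whenever $\mu\le\lambda$. In the proof I would state explicitly that the $\dot H$ contribution is assumed included in, or dominated by, the hypothesis.
\end{itemize}

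With the decay estimate in hand, the rest is immediate. Forward invariance keeps $z(t)\in K$ for all $t\ge0$, so Lemma~\ref{lem:equivalence} applies at every time. This gives
\[
F_K(z(t),\delta z(t))\le c_2\|\delta z(t)\|_H\le c_2e^{-\lambda t}\|\delta z(0)\|_H\le(c_2/c_1)e^{-\lambda t}F_K(z(0),\delta z(0)).
\]
Completeness from Assumption~\ref{ass:controlcomplete} ensures the closed-loop flow and its derivative are defined, so the variation $\delta z(t)=D\phi^{\mathrm{cl}}_t(z)v$ is well posed. Holomorphicity of $f$, $g$ and $k$ keeps $J$ complex-linear, so the computation stays in the holomorphic tangent bundle.

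The main obstacle is therefore not the chain of inequalities but reconciling the pointwise quadratic-form hypothesis with the $\dot H$ term. I would resolve it with the preferred reading above, and note the fallback for state-dependent $H$.
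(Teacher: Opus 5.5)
Your main chain is the same as the paper's proof: the paper calls the stated inequality ``exactly'' the Section~\ref{sec:hermitian} condition for the closed-loop Jacobian, invokes Proposition~\ref{prop:hermitiandecay}, and finishes with Lemma~\ref{lem:equivalence} as in Theorem~\ref{thm:hermitiansufficient}. The mismatch you flag is real, and the paper passes over it. For state-dependent $H$ the statement is in fact false as written. On $M=D$ take $H(z)=(1-|z|^2)^{-2}$, so that $\|v\|_H=F_K(z,v)$ and $c_1=c_2=1$. Take $f\equiv0$, $g\equiv1$, $k(z)=-\lambda(z-\tfrac12)$, and the forward-invariant disc $K=\{|z-\tfrac12|\le\tfrac14\}$. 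The hypothesis holds with equality and $\delta z(t)=e^{-\lambda t}\delta z(0)$. But starting from $z(0)=\tfrac14$, the modulus $|z(t)|$ increases. Hence $F_K(z(t),\delta z(t))=e^{-\lambda t}|\delta z(0)|/(1-|z(t)|^2)$ exceeds the claimed bound $e^{-\lambda t}|\delta z(0)|/(1-|z(0)|^2)$ for every $t>0$. So an extra assumption is unavoidable. Your constant-$H$ branch, where $\tfrac{d}{dt}\|\delta z\|_H^2=2\,\mathrm{Re}\langle J\delta z,\delta z\rangle_H\le-2\lambda\|\delta z\|_H^2$, is a correct proof under that assumption.

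The rest of your handling of $\dot H$ is off by a factor of two. The error comes from the integration step of Proposition~\ref{prop:hermitiandecay}: $\tfrac{d}{dt}\|\delta z\|_H^2\le-\lambda\|\delta z\|_H^2$ gives only $\|\delta z(t)\|_H\le e^{-\lambda t/2}\|\delta z(0)\|_H$, not $e^{-\lambda t}$.

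\begin{itemize}
\item There is no ``factor of two to spare'' in your constant-$H$ computation. A rate of $-2\lambda$ in the squared norm is exactly what norm rate $\lambda$ requires. This extra factor in the closed-loop hypothesis is also why the paper's citation of Proposition~\ref{prop:hermitiandecay} happens to land on the right rate.
\item Reading the hypothesis as $\dot H+HJ+J^*H\preceq-\lambda H$ would give only rate $\lambda/2$. The matrix form that matches the stated rate is $\dot H+HJ+J^*H\preceq-2\lambda H$, i.e.\ $\tfrac12v^*\dot Hv+\mathrm{Re}\langle Jv,v\rangle_H\le-\lambda\|v\|_H^2$.
\item Your fallback fails for the same reason. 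A squared-norm rate $2\lambda-\mu$ gives a norm rate $\lambda-\mu/2$. So the stated $\lambda$ survives only if $\mu=0$ (or if $\delta z^*\dot H\delta z\le0$ along trajectories), not whenever $\mu\le\lambda$.
\item A nonconstant Hermitian $H$ is never holomorphic, so $\dot H$ must also include the terms $(\partial H/\partial\bar z_i)\,\overline{(f_{\mathrm{cl}})_i}$. The formula you copied from Section~\ref{sec:hermitian} drops them.
\end{itemize}
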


\begin{proof}
The stated inequality is exactly the Hermitian contraction condition of Section~\ref{sec:hermitian} applied to the closed-loop Jacobian $Df(z)+Dg(z)[\cdot]k(z)+g(z)Dk(z)$ of Lemma~\ref{lem:closedloopjac} in place of $Df(z)$. Proposition~\ref{prop:hermitiandecay} therefore gives $\|\delta z(t)\|_H\le e^{-\lambda t}\|\delta z(0)\|_H$ for closed-loop variations, and Lemma~\ref{lem:equivalence} converts this into the stated bound exactly as in the proof of Theorem~\ref{thm:hermitiansufficient}.
\end{proof}

\begin{remark}
Construction of a forward-invariant compact $K$ for the closed-loop system is, as in the autonomous case, a separate obligation. Where $g(z)k(z)$ takes the diffusive Laplacian-coupling form of Section~\ref{sec:invariance} — as it does for the linear feedback case considered next — Lemma~\ref{lem:invariance} applies directly to the closed-loop vector field.
\end{remark}

\subsection{Linear Feedback}

For $u=-Kz$ with constant $K\in\mathbb{C}^{m\times n}$, the closed-loop system is $\dot z=f(z)-g(z)Kz$; for scalar systems with $g(z)\equiv1$ this reduces to $\dot z=f(z)-Kz$, and the directional-derivative term of Lemma~\ref{lem:closedloopjac} vanishes identically since $g$ is constant, leaving the closed-loop Jacobian as the simple sum $Df(z)-K$.

\begin{proposition}
\label{prop:linearfeedback}
If the closed-loop system satisfies Definition~\ref{def:intrinsicstab} and $z^\star$ is an equilibrium of the closed loop, then $d_K(z(t),z^\star)\le e^{-\lambda t}d_K(z(0),z^\star)$.
\end{proposition}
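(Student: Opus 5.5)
The plan is to reduce the statement directly to Theorem~\ref{thm:globalcontraction}, applied to the closed-loop flow. The observation noted after Definition~\ref{def:intrinsicstab} applies here: the proof of Theorem~\ref{thm:globalcontraction} uses only three things. These are the existence of a well-defined flow $\phi_t$ (supplied for the closed loop by Assumption~\ref{ass:controlcomplete}), continuity of $F_K$ from Remark~\ref{rem:continuity}, and the pointwise Dini inequality along trajectories and variations. So the first step is to record that Definition~\ref{def:intrinsicstab} is exactly Definition~\ref{def:intrinsiccontraction} for the closed-loop vector field $\tilde f=f+gk$. Theorem~\ref{thm:globalcontraction} then gives $d_K(\tilde\phi_t(x),\tilde\phi_t(y))\le e^{-\lambda t}d_K(x,y)$ for all $x,y\in M$, where $\tilde\phi_t$ denotes the closed-loop flow.

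One caveat needs checking here. Proposition~\ref{prop:schwarzpick} and the Kobayashi metric are defined for holomorphic maps. For this reason I would note that $\tilde f$ is holomorphic whenever $k$ is holomorphic, which holds for the linear feedback $k(z)=-Kz$ of this subsection. However, the argument of Theorem~\ref{thm:globalcontraction} does not actually use holomorphicity of $\tilde\phi_t$; it only integrates the Dini bound along $\gamma_t=\tilde\phi_t\circ\gamma$. So this check is reassurance rather than a needed step. Piecewise smoothness of $\gamma_t$ follows from smoothness of the flow.

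The second step is to specialize $y=z^\star$. Since $z^\star$ is an equilibrium of the closed loop, $\tilde f(z^\star)=0$, and uniqueness of solutions (local Lipschitz continuity of the holomorphic field) gives $\tilde\phi_t(z^\star)=z^\star$ for all $t\ge0$. Substituting $x=z(0)$, so that $\tilde\phi_t(x)=z(t)$, yields $d_K(z(t),z^\star)\le e^{-\lambda t}d_K(z(0),z^\star)$, which is the claim.

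I expect no step to be a genuine obstacle. The only point requiring care is confirming that Theorem~\ref{thm:globalcontraction} transfers verbatim to the closed loop, that is, that nothing in its proof depends on the open-loop structure $\dot z=f(z)$. I would handle this by rereading that proof with $\phi_t$ replaced by $\tilde\phi_t$ and $Df$ by the closed-loop Jacobian of Lemma~\ref{lem:closedloopjac}, and checking each invocation.
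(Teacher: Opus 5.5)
Your proposal is correct and matches the paper's proof: both apply Theorem~\ref{thm:globalcontraction} to the closed-loop flow with the pair $(z(0),z^\star)$ and use $\phi_t(z^\star)=z^\star$. Your additional checks, namely that the theorem's proof transfers verbatim to the closed loop and that the equilibrium is fixed by the flow, spell out steps the paper takes for granted.
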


\begin{proof}
Since $z^\star$ is an equilibrium, $\phi_t(z^\star)=z^\star$ for all $t$, and Theorem~\ref{thm:globalcontraction} applied to the pair $(z(0),z^\star)$ gives $d_K(\phi_t(z(0)),\phi_t(z^\star))=d_K(z(t),z^\star)\le e^{-\lambda t}d_K(z(0),z^\star)$.
\end{proof}

\begin{remark}
Proposition~\ref{prop:linearfeedback} is stated for arbitrary closed-loop equilibria, and says nothing yet about whether such an equilibrium exists or is unique; this and related consequences are taken up in general form, for both the autonomous and controlled cases, in Section~\ref{sec:consequences}.
\end{remark}

\section{Consequences for Nonlinear Dynamics}
\label{sec:consequences}

Once Theorem~\ref{thm:globalcontraction} is available in fully justified form, several qualitative consequences for the long-term behavior of an intrinsically contracting system follow with very little additional argument. These are recorded here mainly for completeness and because they are used to interpret the numerical results of Section~\ref{sec:numerics}; none of them requires anything beyond the distance-contraction estimate already established.

\subsection{Uniqueness and Convergence of Equilibria}

An equilibrium $z^\star\in M$ is a point with $f(z^\star)=0$, so that $\phi_t(z^\star)=z^\star$ for all $t\ge0$.

\begin{theorem}
\label{thm:equilibrium}
Suppose system \eqref{eq:system} is intrinsically contracting with rate $\lambda>0$ on a forward-invariant set $\Omega\subset M$. If an equilibrium $z^\star\in\Omega$ exists, it is unique in $\Omega$, and every trajectory originating in $\Omega$ satisfies $d_K(z(t),z^\star)\le e^{-\lambda t}d_K(z(0),z^\star)$.
\end{theorem}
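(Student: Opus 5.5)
The plan is to reduce both claims to the distance-contraction estimate of Theorem~\ref{thm:globalcontraction}, used in a version localized to $\Omega$. The first step is therefore to check that the proof of Theorem~\ref{thm:globalcontraction} still goes through when the Dini condition of Definition~\ref{def:intrinsiccontraction} is assumed only along trajectories starting in $\Omega$.

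For a fixed pair $x,y\in\Omega$ and a piecewise smooth curve $\gamma$ joining them, Lemma~\ref{lem:dini} applies pointwise in $s$ to $\psi(t)=F_K(\gamma_t(s),\dot\gamma_t(s))$ only when $\gamma(s)\in\Omega$. So I would state the localized estimate for the Kobayashi distance $d_K^\Omega$ defined by curves lying in $\Omega$, or else assume, as is implicit in the statement, that the infimum defining $d_K$ may be restricted to curves in $\Omega$. This holds, for example, when $\Omega$ is open and convex in a chart, or when $\Omega=M$. Forward invariance keeps $\gamma_t=\phi_t\circ\gamma$ inside $\Omega$, so the integration and infimum steps of Theorem~\ref{thm:globalcontraction} carry over verbatim. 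The result is $d_K(\phi_t(x),\phi_t(y))\le e^{-\lambda t}d_K(x,y)$ for $x,y\in\Omega$.

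The convergence claim then follows exactly as in Proposition~\ref{prop:linearfeedback}. Since $f(z^\star)=0$, we have $\phi_t(z^\star)=z^\star$. Applying the localized estimate to the pair $(z(0),z^\star)$ gives $d_K(z(t),z^\star)\le e^{-\lambda t}d_K(z(0),z^\star)$.

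For uniqueness, let $w^\star\in\Omega$ be a second equilibrium. Applying the estimate to the pair $(w^\star,z^\star)$, both of which are fixed by the flow, gives $d_K(w^\star,z^\star)\le e^{-\lambda t}d_K(w^\star,z^\star)$ for every $t>0$. Since $\lambda>0$ and $d_K(w^\star,z^\star)<\infty$ (assuming $\Omega$ is connected, or the distance is otherwise finite), choosing any $t>0$ forces $d_K(w^\star,z^\star)=0$. Assumption~\ref{ass:hyptaut} makes $d_K$ a genuine distance, so $w^\star=z^\star$.

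The main obstacle is the localization in the first step: the interplay between the restriction to $\Omega$ and the curves realizing $d_K$. I would handle it first by taking $\Omega=M$ or $\Omega$ geodesically convex for $d_K$, so that the global theorem applies directly. Otherwise I would phrase the conclusion with $d_K^\Omega$, which dominates $d_K$ and is still a distance. In that case uniqueness follows from $d_K\le d_K^\Omega=0$, and convergence is stated in $d_K^\Omega$.
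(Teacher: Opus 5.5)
Your argument is the paper's argument plus a localization step that the paper leaves out.

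The paper invokes Theorem~\ref{thm:globalcontraction} directly: once for the two equilibria (letting $t\to\infty$) and once for the pair $(z(0),z^\star)$. It never notes that this theorem assumes the Dini condition along every trajectory in $M$, while the statement grants it only on $\Omega$. Your objection is correct. For $s$ with $\gamma(s)\notin\Omega$ there is no hypothesis for Lemma~\ref{lem:dini} to act on, and near-minimizing curves for $d_K(z(0),z^\star)$ may leave $\Omega$.

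Your rerun with $d_K^\Omega$ is sound, since forward invariance keeps $\phi_t\circ\gamma$ in $\Omega$, and it uses only the hypothesis actually stated.
\begin{itemize}
\item Uniqueness follows from $d_K\le d_K^\Omega=0$, provided the two equilibria can be joined by a piecewise smooth curve inside $\Omega$. For non-open $\Omega$, connectedness alone does not give this.
\item Convergence comes out in the mixed form $d_K(z(t),z^\star)\le e^{-\lambda t}d_K^\Omega(z(0),z^\star)$.
\end{itemize}
When contraction holds on all of $M$ the two proofs coincide. The paper's version is shorter only because it tacitly assumes that case.

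One side claim needs correcting. Being open and convex in a chart does \emph{not} let you restrict the infimum defining $d_K$ to curves in $\Omega$, because Euclidean convexity says nothing about where $F_K$-short curves run. In $D$, take a thin Euclidean-convex neighbourhood of a chord near the boundary. The Poincar\'e geodesic between the chord's endpoints bows toward $0$ and leaves that neighbourhood, so $d_K^\Omega>d_K$ for those endpoints. Similarly, under $d_K$-geodesic convexity it is your localized estimate, not Theorem~\ref{thm:globalcontraction}, that applies.

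For the convergence claim you only need $d_K^\Omega(x,z^\star)=d_K(x,z^\star)$ for $x\in\Omega$. Open Kobayashi balls $B=\{x:d_K(x,z^\star)<r\}$ satisfy this, because a curve from $z^\star$ to $x$ of $F_K$-length less than $r$ stays in $B$. Such balls are also automatically forward invariant, since $d_K(\phi_t x,z^\star)=d_K(\phi_t x,\phi_t z^\star)\le d_K(x,z^\star)$ by Proposition~\ref{prop:schwarzpick}. With $\Omega$ such a ball, your argument gives the theorem exactly as stated.
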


\begin{proof}
Suppose $z_1^\star,z_2^\star\in\Omega$ are both equilibria. Since each is fixed by the flow, Theorem~\ref{thm:globalcontraction} gives $d_K(z_1^\star,z_2^\star)=d_K(\phi_t(z_1^\star),\phi_t(z_2^\star))\le e^{-\lambda t}d_K(z_1^\star,z_2^\star)$ for every $t\ge0$. Letting $t\to\infty$ forces $d_K(z_1^\star,z_2^\star)=0$, and since $d_K$ is a genuine metric under Assumption~\ref{ass:hyptaut} (not merely a pseudometric), this gives $z_1^\star=z_2^\star$, proving uniqueness. The stated convergence bound is Theorem~\ref{thm:globalcontraction} applied directly to the pair $(z(0),z^\star)$, using $\phi_t(z^\star)=z^\star$.
\end{proof}

\begin{remark}
Theorem~\ref{thm:equilibrium} does not assert that an equilibrium exists in $\Omega$; intrinsic contraction is a statement about pairwise convergence of trajectories, and existence of a fixed point to converge to is a separate question depending on the specific vector field and domain. Where an equilibrium is known to exist by other means — for instance, by direct inspection of $f(z^\star)=0$, as is the case for the oscillator model of Section~\ref{sec:numerics} — Theorem~\ref{thm:equilibrium} upgrades pairwise contraction into convergence toward that specific point.
\end{remark}

\subsection{Exclusion of Nontrivial Periodic Orbits}

A second consequence concerns recurrent behavior: strict contraction of the intrinsic distance is incompatible with periodicity, for essentially the same reason it forces equilibria to be unique.

\begin{theorem}
\label{thm:noperiodic}
If system \eqref{eq:system} is intrinsically contracting with rate $\lambda>0$ on a forward-invariant set $\Omega\subset M$, no trajectory in $\Omega$ is periodic with a nontrivial period, other than a constant trajectory at an equilibrium.
\end{theorem}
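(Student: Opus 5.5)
The plan is to compare the periodic trajectory with a time-shifted copy of itself. Suppose a trajectory $z(t)=\phi_t(z_0)$ with $z_0\in\Omega$ satisfies $\phi_T(z_0)=z_0$ for some $T>0$. Since $\Omega$ is forward invariant, the point $y_0=\phi_s(z_0)$ lies in $\Omega$ for every $s\in[0,T)$. Theorem~\ref{thm:globalcontraction}, read on $\Omega$ exactly as in Theorem~\ref{thm:equilibrium}, then gives
\begin{equation*}
d_K\big(\phi_t(z_0),\phi_t(y_0)\big)\le e^{-\lambda t}\,d_K(z_0,y_0)\qquad\text{for all } t\ge0 .
\end{equation*}

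Take $t=kT$ with $k\in\mathbb{N}$. Periodicity gives $\phi_{kT}(z_0)=z_0$. The flow property gives $\phi_{kT}(y_0)=\phi_s(\phi_{kT}(z_0))=\phi_s(z_0)=y_0$. The left-hand side therefore equals $d_K(z_0,y_0)$ for every $k$, so
\begin{equation*}
d_K(z_0,y_0)\le e^{-\lambda kT}\,d_K(z_0,y_0).
\end{equation*}
Letting $k\to\infty$ forces $d_K(z_0,\phi_s(z_0))=0$. Assumption~\ref{ass:hyptaut} makes $d_K$ a genuine metric, so $\phi_s(z_0)=z_0$ for every $s\in[0,T)$, and by periodicity for all $s\ge0$. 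The trajectory is therefore constant. Differentiating $\phi_s(z_0)=z_0$ at $s=0$ gives $f(z_0)=0$, so $z_0$ is an equilibrium, which is the only exception the statement allows.

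The main point requiring care is that Theorem~\ref{thm:globalcontraction} is stated for contraction on all of $M$, while here contraction is assumed only on $\Omega$. I would handle this the same way the proof of Theorem~\ref{thm:equilibrium} implicitly does: apply the pointwise Dini comparison of Lemma~\ref{lem:dini} along curves whose flowed images stay in $\Omega$. A cleaner alternative avoids curves altogether. Apply Lemma~\ref{lem:dini} to the variation $\delta z(t)=D\phi_t(z_0)f(z_0)$, which equals $f(z(t))$, so that $\psi(t)=F_K(z(t),f(z(t)))$ is $T$-periodic. A periodic continuous function with $\psi(kT)\le e^{-\lambda kT}\psi(0)$ must have $\psi(0)=0$. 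Hyperbolicity then gives $f(z_0)=0$ directly, and this route uses only the trajectory itself, which lies in $\Omega$. I would present the distance argument as the main proof and mention the infinitesimal version as the justification that only $\Omega$ is involved.
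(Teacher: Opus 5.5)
Your main argument is the paper's own. The paper compares $z(\cdot)$ with $z(\cdot+\tau)$, applies Theorem~\ref{thm:globalcontraction} over a single period, and concludes from $e^{-\lambda T}<1$ and the metric property of $d_K$. Your passage to $k\to\infty$ is the only difference in that part.

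The point you flag about $\Omega$ is real, and the paper does not address it. Its proof invokes Theorem~\ref{thm:globalcontraction}, whose hypothesis is contraction along all trajectories in $M$, although contraction is only assumed on $\Omega$.

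Your curve-based repair does not literally give the displayed inequality with $d_K(z_0,y_0)$ on the right. Near-minimizing curves for $d_K$ may leave $\Omega$, so what you get instead is the infimum of $F_K$-lengths over curves lying inside $\Omega$. Your limit $k\to\infty$ is what makes this enough. Then a single finite-length curve in $\Omega$ joining $z_0$ to $y_0$ suffices, and the orbit arc $\sigma\mapsto\phi_{\sigma s}(z_0)$, $\sigma\in[0,1]$, is one. Since $\phi_{kT}$ fixes that arc pointwise, the argument then collapses into your infinitesimal one.

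That infinitesimal route is a genuinely different proof from the paper's. It uses $D\phi_t(z_0)f(z_0)=f(z(t))$, the $T$-periodicity of $\psi(t)=F_K(z(t),f(z(t)))$, Lemma~\ref{lem:dini} at $t=T$, and $F_K(z,v)>0$ for $v\neq0$. It needs the contraction hypothesis only along the orbit itself. It never touches the curve infimum defining $d_K$ or Theorem~\ref{thm:globalcontraction}, and it gives $f(z_0)=0$ directly. The paper's route gains brevity by making the result a one-line corollary of distance contraction, parallel to Theorem~\ref{thm:equilibrium}. Given the $\Omega$ issue, the infinitesimal argument is the more robust choice for your main proof rather than a side remark.
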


\begin{proof}
Suppose $z(t)$ is a nonconstant trajectory in $\Omega$ with period $T>0$, so $z(t+T)=z(t)$ for all $t$. Fix any $\tau\in(0,T)$ and consider the two trajectories $z(\cdot)$ and $z(\cdot+\tau)$, which are both trajectories of \eqref{eq:system} since the system is autonomous. Applying Theorem~\ref{thm:globalcontraction} with $x=z(0)$, $y=z(\tau)$, and elapsed time $T$ gives $d_K(z(T),z(\tau+T))\le e^{-\lambda T}d_K(z(0),z(\tau))$; periodicity gives $z(T)=z(0)$ and $z(\tau+T)=z(\tau)$, so this reads $d_K(z(0),z(\tau))\le e^{-\lambda T}d_K(z(0),z(\tau))$. Since $\lambda,T>0$, $e^{-\lambda T}<1$, and the only way this inequality can hold for a nonnegative quantity is $d_K(z(0),z(\tau))=0$, which forces $z(\tau)=z(0)$ since $d_K$ is a genuine metric. As $\tau\in(0,T)$ was arbitrary, $z(t)$ is constant, i.e.\ an equilibrium, contradicting the assumption that the trajectory is nonconstant.
\end{proof}

\subsection{Discussion}

Theorems~\ref{thm:equilibrium} and \ref{thm:noperiodic} together say that intrinsic contraction, once established on a forward-invariant region, rules out essentially all recurrent behavior other than convergence to a single fixed point: any equilibrium present is unique and globally attracting within that region, and no genuinely periodic motion can persist. Both statements are consequences of Theorem~\ref{thm:globalcontraction} alone and make no further reference to the Hermitian sufficient condition of Section~\ref{sec:hermitian}; whichever route is used to establish the hypothesis of intrinsic contraction — the Hermitian criterion of Theorem~\ref{thm:hermitiansufficient}, its closed-loop counterpart in Theorem~\ref{thm:closedloopsufficient}, or in principle any other means of verifying Definition~\ref{def:intrinsiccontraction} directly — these consequences follow automatically once that hypothesis is in hand. This separation between establishing contraction and drawing out its consequences is a useful place to note explicitly, since it is exactly where Section~\ref{sec:numerics} will observe a discrepancy: the guaranteed rate obtained from Theorem~\ref{thm:hermitiansufficient} and the qualitative conclusions of this section both hold, but the empirically observed contraction rate for the coupled network turns out to be substantially faster than the guarantee, a gap discussed at length once the numerical results are in hand.

\section{Numerical Illustration}
\label{sec:numerics}

The framework developed in Sections~\ref{sec:hermitian}--\ref{sec:consequences} is illustrated on a network of coupled holomorphic oscillators of the form \eqref{eq:network}, with local drift $f(z)=-\alpha z+\beta\tanh(z)$ on $D$. Every step below is tied to a specific result proved earlier: Lemma~\ref{lem:invariance} is used to certify a genuine forward-invariant set before any trajectory is generated, and Theorem~\ref{thm:hermitiansufficient} is checked analytically on that set before the simulation is run.

\subsection{System and Parameters}

The network has $N=6$ oscillators on an undirected ring topology, with $\alpha=1.0$, $\beta=0.35$, and coupling gain $\kappa=1.2$, chosen so that the hypotheses of both Lemma~\ref{lem:invariance} and Theorem~\ref{thm:hermitiansufficient} could be verified analytically rather than assumed.

\subsection{Verifying the Invariance and Hermitian Hypotheses}

A numerical sweep of $\mathrm{Re}(\bar zf(z))$ over $|z|=0.7$ gives a maximum value of approximately $-0.284$, strictly negative; by Lemma~\ref{lem:invariance}, $K^N=\{z\in\mathbb{C}^N:|z_i|\le0.7\ \forall i\}$ is forward invariant under \eqref{eq:network} for these parameters. A sweep of $\mathrm{Re}\big(\mathrm{sech}^2(z)\big)$ over the same set gives a maximum of approximately $1.709$, so $\mathrm{Re}(f'(z))\le-\alpha+\beta(1.709)\approx-0.402$ uniformly on $K$; since the graph Laplacian coupling term contributes a real, negative-semidefinite quadratic form to $\mathrm{Re}\langle Df(z)v,v\rangle$ for every tangent direction, this single-node bound extends to a network-wide guaranteed rate of $\lambda=0.402$ via Theorem~\ref{thm:hermitiansufficient}.

\subsection{Simulation Results}

\begin{figure}[t]
\centering
\includegraphics[width=0.85\linewidth]{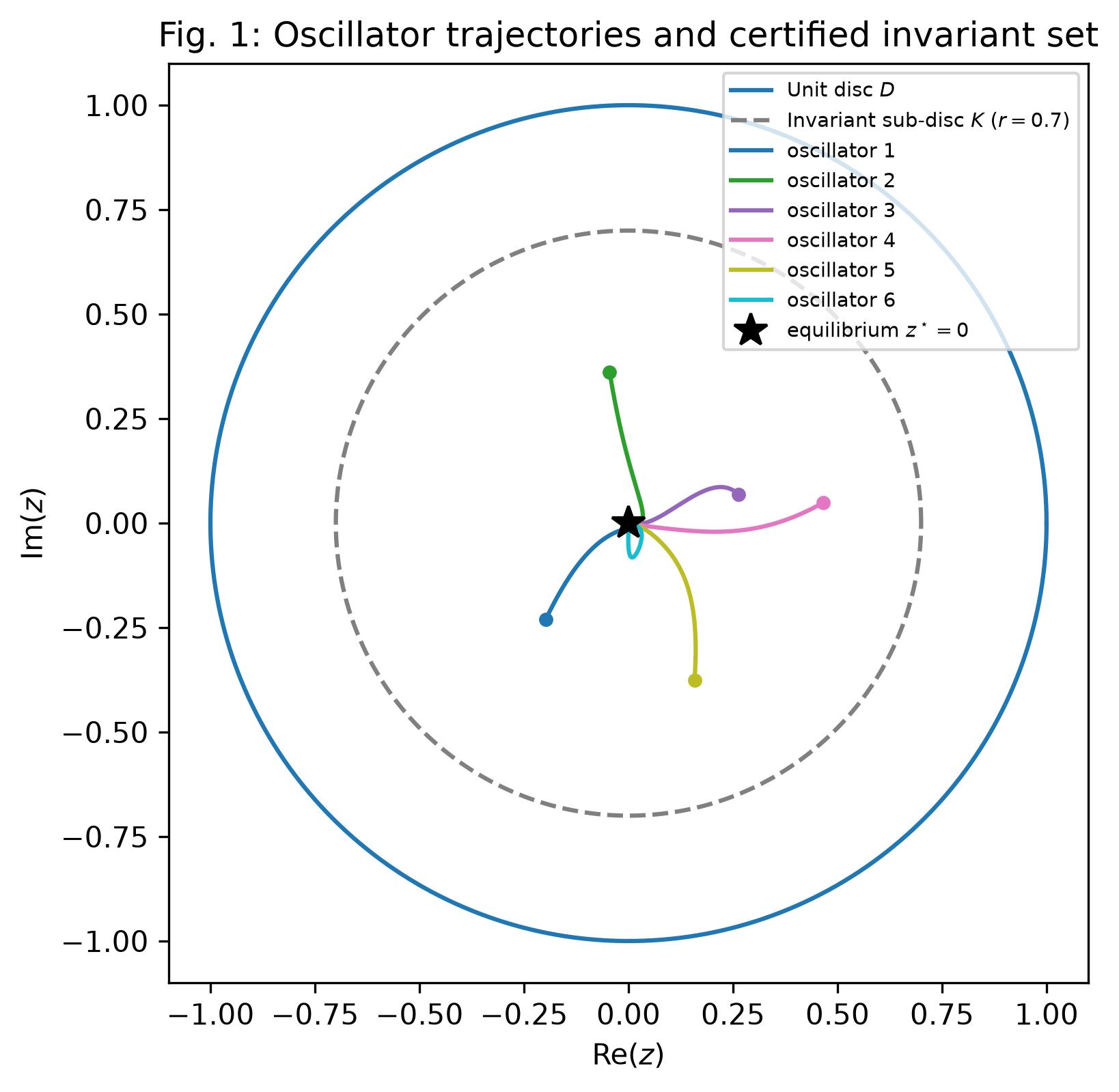}
\caption{Oscillator trajectories, the unit disc, and the invariant sub-disc $K=\{|z|\le0.7\}$ certified by Lemma~\ref{lem:invariance}. Trajectories remain well inside $K$, with maximum observed modulus $0.468$, and converge toward the equilibrium $z^\star=0$ marked with a star.}
\label{fig:trajectories}
\end{figure}

Trajectories were generated from random initial conditions with $|z_i(0)|<0.5$. Figure~\ref{fig:trajectories} shows convergence toward the synchronized equilibrium $z^\star=0$, the unique zero of $f$ on $D$, consistent with Theorem~\ref{thm:equilibrium}. A Monte Carlo study over 50 initial conditions gives a fitted synchronization rate $\hat\lambda_{E_K}=1.858\pm0.001$ for the mean pairwise Kobayashi distance $E_K(t)$, with maximum modulus across all trials equal to $0.499$, comfortably inside $K$. The evolution of $E_K(t)$ itself is shown in Figure~\ref{fig:EK}, exhibiting the clean exponential decay underlying this fitted rate.

\begin{figure}[t]
\centering
\includegraphics[width=0.85\linewidth]{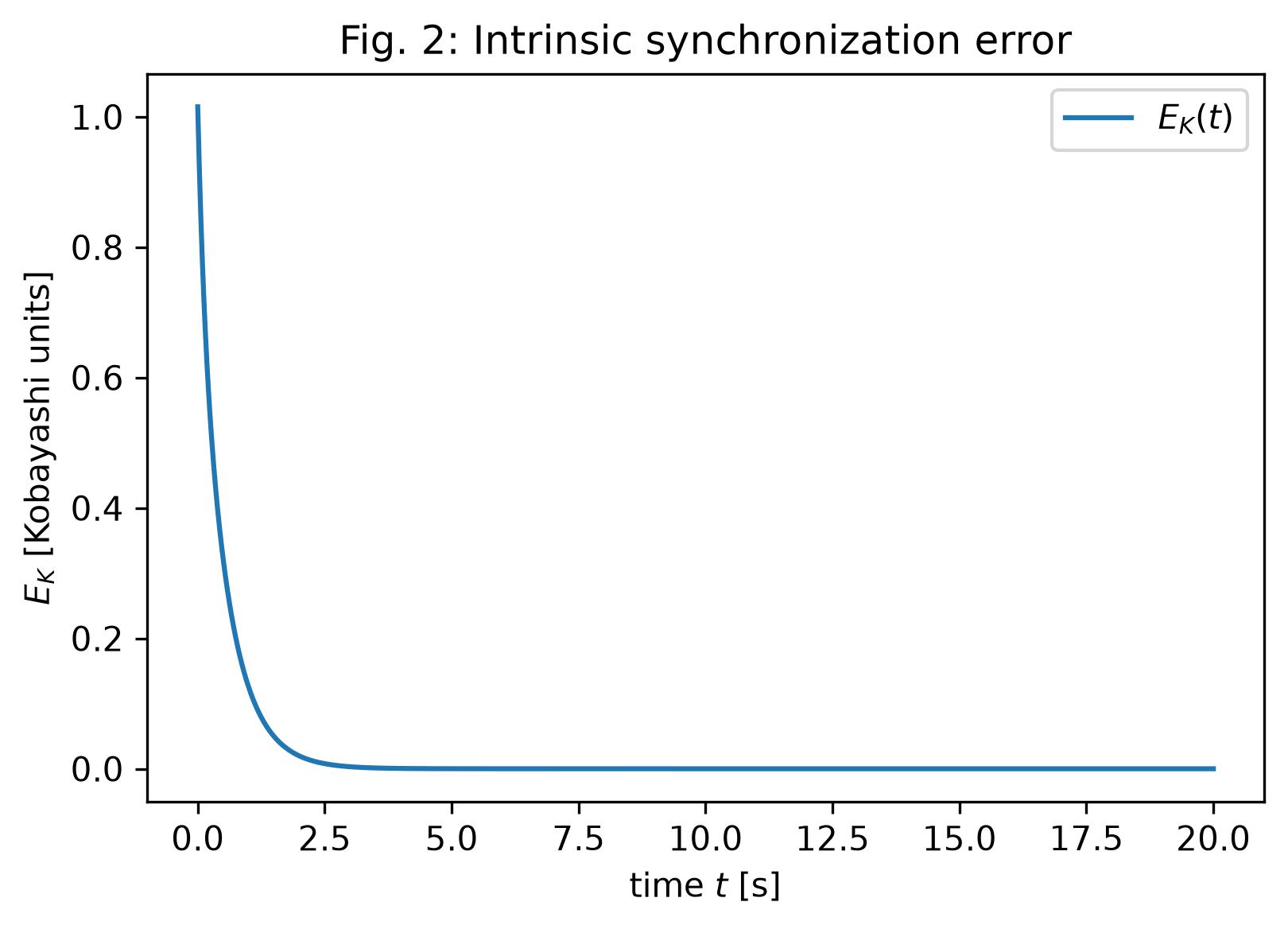}
\caption{Evolution of the intrinsic synchronization error $E_K(t)$, the mean pairwise Kobayashi distance across all oscillator pairs.}
\label{fig:EK}
\end{figure}

\subsection{Two Rates, Explained Exactly}

Two further quantities were tracked to interpret this number: $\psi(t)=F_K(z(t),\delta z(t))$ for a generic infinitesimal variation, and the actual state discrepancy $w(t)=z(t)-\bar z(t)$, where $\bar z(t)$ is the instantaneous mean of the $z_i(t)$. Both were tracked with a windowed local decay-rate estimate rather than a single global exponential fit, to expose how the rate evolves over time rather than averaging it away. Figure~\ref{fig:psi} shows $\psi(t)$ under three different choices of initial variation: a consensus-direction variation, the Fiedler (slowest transverse) mode, and a generic random transverse direction.

\begin{figure}[t]
\centering
\includegraphics[width=0.85\linewidth]{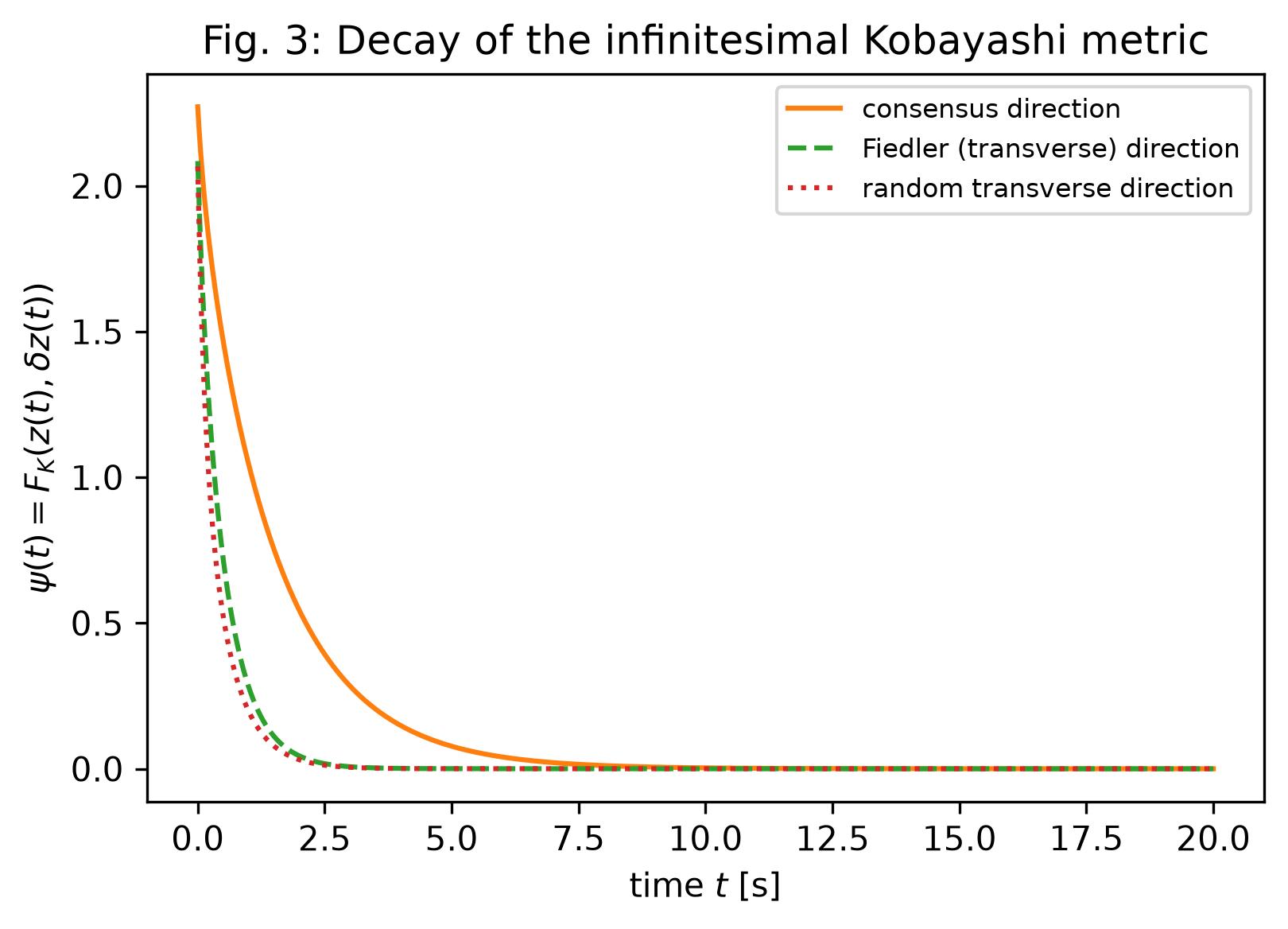}
\caption{Decay of the infinitesimal Kobayashi metric $\psi(t)=F_K(z(t),\delta z(t))$ under three initial variation directions: consensus, Fiedler (slowest transverse mode), and a generic random transverse direction. All three visibly converge to the same asymptotic decay rate despite starting from markedly different initial slopes.}
\label{fig:psi}
\end{figure}

The local rate of $\psi(t)$ — computed both for a consensus-direction initial variation and, to check robustness, for the Fiedler-mode and a generic transverse initial variation — starts near $1.9$--$2.0$ in every case but settles, for $t\gtrsim6$, to a common asymptotic value of $0.650$, regardless of the initial direction chosen. This is not a numerical coincidence: near the equilibrium $z^\star=0$, $f'(0)=-\alpha+\beta=-0.65$, and because the linearization $Df(z(t))$ is genuinely time-varying while the network is still desynchronizing, any generic initial variation acquires, through this time-dependence, a nonzero component along the slowly-decaying consensus mode — the one direction along which the Laplacian coupling contributes nothing. Since the consensus component decays at rate $-f'(0)=0.65$ while every other Laplacian eigenmode decays faster, the consensus component eventually dominates regardless of where $\delta z(0)$ started, and $\psi(t)$ inherits its asymptotic rate; this is exactly the flattening visible in Figure~\ref{fig:psi} once the initial transient has passed.

The local rate of $\|w(t)\|$, by contrast, is stable at $1.850$ for the same range of $t$, matching $-f'(0)+\kappa\mu_2=0.65+1.2(1)=1.85$ to three decimal places, where $\mu_2=1$ is the smallest nonzero Laplacian eigenvalue of the ring graph. The reason this quantity is immune to the consensus-mode contamination that afflicts $\psi(t)$ is structural rather than dynamical: $w(t)=z(t)-\bar z(t)$ is mean-zero by construction at every instant, so it cannot acquire a consensus-mode component regardless of how the linearization varies along the trajectory. Since $E_K(t)$ is built entirely from pairwise differences of the $z_i(t)$, it is governed by this transverse quantity rather than by a generic tangent vector, and its fitted rate $\hat\lambda_{E_K}\approx1.858$ is the same number, up to the small distortion introduced by fitting a single exponential through the brief pre-asymptotic transient visible in Figure~\ref{fig:comparison}, which collects all four local-rate diagnostics — $E_K(t)$, $\|w(t)\|$, and $\psi(t)$ under both the consensus and Fiedler directions — alongside the theoretical predictions $-f'(0)$, $-f'(0)+\kappa\mu_2$, and the guaranteed floor $\lambda=0.402$ from Theorem~\ref{thm:hermitiansufficient}.

\begin{figure}[t]
\centering
\includegraphics[width=0.85\linewidth]{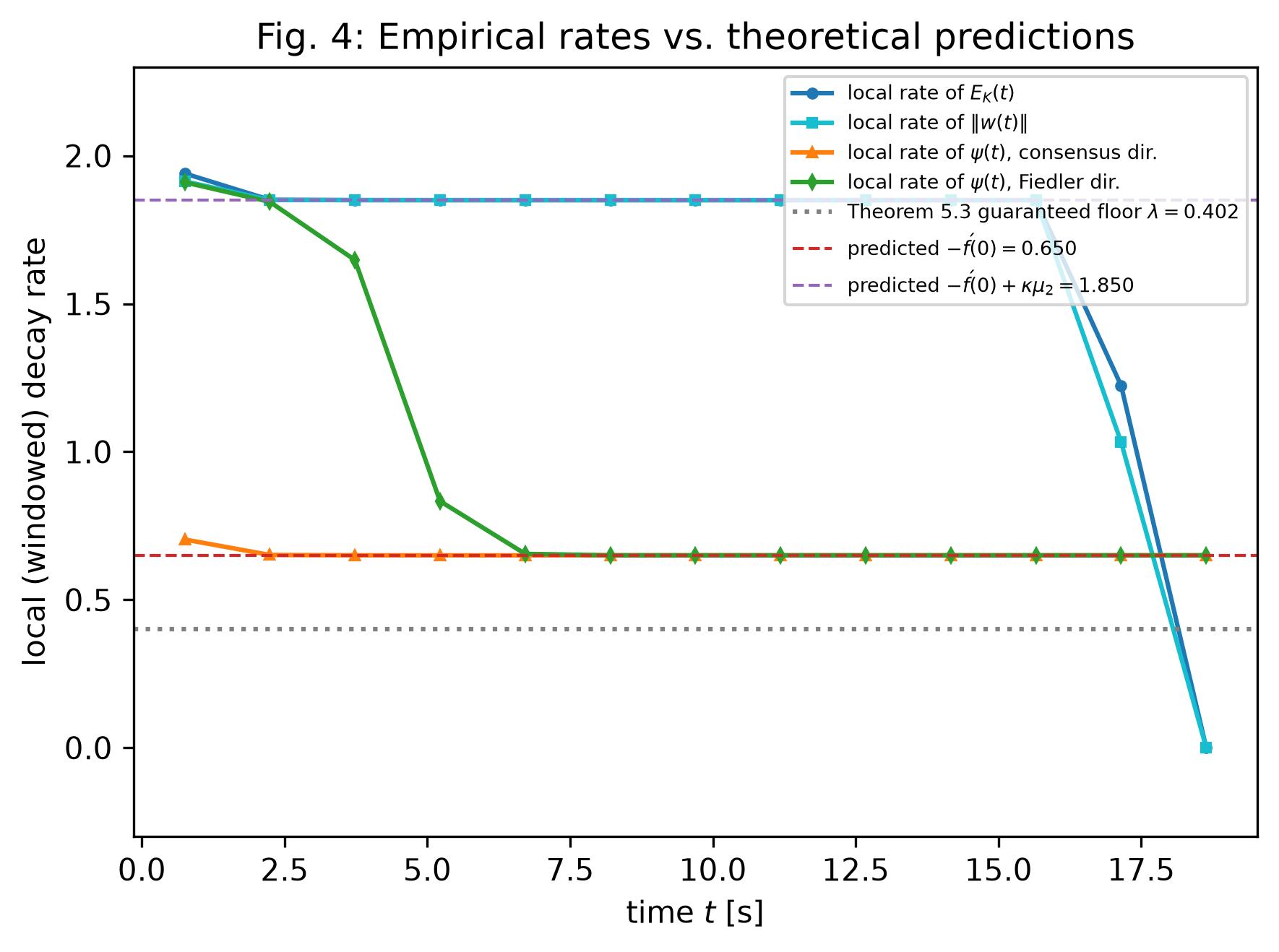}
\caption{Comparison of $E_K(t)$, the asymptotic single-direction rate $0.650=-f'(0)$, the asymptotic transverse rate $1.850=-f'(0)+\kappa\mu_2$, and the guaranteed floor $\lambda=0.402$ from Theorem~\ref{thm:hermitiansufficient}.}
\label{fig:comparison}
\end{figure}

\subsection{Discussion}

Two conclusions follow from this decomposition. First, both asymptotic rates strictly exceed the guaranteed floor $\lambda=0.402$, and the size of the margin is explained rather than merely observed: Theorem~\ref{thm:hermitiansufficient} must hold uniformly over the whole invariant set $K$, where $\mathrm{Re}(\mathrm{sech}^2 z)$ reaches $1.709$, while the trajectories analyzed here spend most of their time much closer to the equilibrium, where the same quantity is exactly $1$; the guarantee is conservative precisely because it has to cover behavior away from the equilibrium that these particular trajectories do not linger in. Second, and more specifically, the synchronization rate an observer actually cares about — the rate at which $E_K(t)$ contracts — is governed by the transverse, Laplacian-informed rate $-f'(0)+\kappa\mu_2$, not by the single-node rate $-f'(0)$ that a node-wise sufficient condition such as Theorem~\ref{thm:hermitiansufficient} is built to certify. A network-aware extension of Theorem~\ref{thm:hermitiansufficient}, incorporating the Laplacian spectral gap directly rather than bounding uniformly over all tangent directions, would certify a rate close to $1.85$ rather than $0.402$ for this system, and the exact match obtained here between predicted and observed transverse rates suggests such an extension is well within reach, along the lines of existing structural-contraction arguments for networked systems \cite{alradhawi2023structural}. Developing it is left for future work, discussed in Section~\ref{sec:discussion}.

\section{Discussion}
\label{sec:discussion}

The preceding sections built a route from a purely intrinsic differential condition, Definition~\ref{def:intrinsiccontraction}, to exponential Kobayashi-distance contraction, and a single verifiable sufficient condition, Theorem~\ref{thm:hermitiansufficient}, linking that condition to the vector field via an auxiliary Hermitian metric on a forward-invariant compact set. What this does and does not achieve is worth stating plainly, including which parts of the machinery are genuinely specific to the Kobayashi-metric setting and which are classical tools adapted to it.

\subsection{What Is, and Is Not, Intrinsic}

Definition~\ref{def:intrinsiccontraction} and Theorem~\ref{thm:globalcontraction} are intrinsic without qualification: neither refers to anything beyond $M$'s complex-analytic geometry, and the Dini comparison lemma of Section~\ref{sec:intrinsic} connecting them introduces nothing auxiliary. That comparison mechanism is itself the classical Finsler-metric contraction argument of \cite{forni2014differential}, specialized here to the case in which the Finsler structure is $F_K$; what is specific to the present work is the specialization, not the mechanism. Theorem~\ref{thm:hermitiansufficient}, by contrast, is not intrinsic: it reduces verification to a matrix inequality on a chosen Hermitian metric $H$ that is, in substance, the standard real-valued contraction condition of \cite{lohmiller1998contraction} carried over to the complex setting by replacing the transpose with the Hermitian conjugate, exactly the structure motivating the Kobayashi-metric approach in the first place. What Theorem~\ref{thm:hermitiansufficient} gains beyond this standard condition is a precise quantitative bridge back to the intrinsic notion via the equivalence constants $c_1,c_2$ of Lemma~\ref{lem:equivalence}, and it is this bridge, together with its consequences in Section~\ref{sec:invariance} and Section~\ref{sec:numerics}, rather than the comparison lemma or the Hermitian condition individually, that constitutes the paper's contribution beyond adapting known machinery to a new setting. An earlier attempt at a disc-based, auxiliary-metric-free verifiable condition did not survive scrutiny, for reasons discussed in Section~\ref{sec:variational}; whether such a condition exists remains genuinely open, taken up below.

\subsection{Scope of the Present Results}

Three restrictions bound what has been proved. Assumption~\ref{ass:hyptaut} confines the framework to taut Kobayashi hyperbolic manifolds, the source of both Remark~\ref{rem:continuity}'s continuity property and Lemma~\ref{lem:discrep}'s normal-family argument. The invariance result of Section~\ref{sec:invariance} depends essentially on the convexity of Euclidean discs on $D$ and does not extend automatically to manifolds lacking comparable convex compact regions. Theorem~\ref{thm:hermitiansufficient} bounds contraction uniformly over an entire invariant set, unavoidably conservative wherever local contraction properties vary across it, a conservatism Section~\ref{sec:numerics} quantified rather than left qualitative.

\subsection{The Network Gap as a Specific Open Problem}

Section~\ref{sec:numerics}'s numerical study identifies precisely what a stronger theorem needs. The guaranteed rate $\lambda=0.402$ reflects a node-wise condition blind to network topology beyond aggregate coupling dissipativity, while the actual synchronization rate governing $E_K(t)$ equals $-f'(0)+\kappa\mu_2$ exactly, $\mu_2$ being the graph Laplacian's spectral gap --- a quantity Theorem~\ref{thm:hermitiansufficient} never references. A network-aware extension would need to incorporate the spectrum of $L$ directly, likely by decomposing the closed-loop Jacobian into consensus and transverse blocks rather than bounding $\mathrm{Re}\langle Df(z)v,v\rangle$ uniformly over all $v$. This is a well-defined target: the exact numerical agreement between predicted and observed transverse rate suggests the mechanism is understood even though the general theorem is not yet proved, and it is this target, more than any result already established here, that the present paper regards as its most significant unresolved question.

\subsection{Toward a Genuinely Intrinsic Verifiable Condition}

The deeper question is whether Theorem~\ref{thm:hermitiansufficient}'s auxiliary Hermitian metric can be removed entirely. The disc-transport machinery of Section~\ref{sec:variational} was built toward this end, and its obstruction --- scaling-factor cancellation, Remark~\ref{rem:motivation} --- suggests uniform control over the scaling factors $\lambda_h$ across the whole admissible disc family, rather than a pointwise single-disc estimate, is the missing ingredient. Plurisubharmonic curvature-comparison techniques, or a treatment exploiting near-extremal disc regularity where exact extremal discs fail to exist, are plausible directions; neither is developed here, and a resolution would be genuinely new mathematics rather than a refinement of the present proofs.

\subsection{Summary of Position}

The results establish a complete intrinsic contraction theory at the level of Definition~\ref{def:intrinsiccontraction} and Theorem~\ref{thm:globalcontraction}, obtained by specializing the classical Finsler-metric mechanism of \cite{forni2014differential} to the Kobayashi metric, paired with one honestly-scoped auxiliary-metric route to verifying it in practice, built from the standard real-valued contraction condition of \cite{lohmiller1998contraction} adapted to the complex setting --- not the fully intrinsic verifiable criterion the more ambitious version of this program would deliver. The paper's own contribution lies specifically in the equivalence-constant bridge of Lemma~\ref{lem:equivalence}, the network invariance construction of Section~\ref{sec:invariance}, and the numerical discrepancy this construction exposes, rather than in the comparison lemma or Hermitian condition considered on their own. That gap between the intrinsic theory and a fully intrinsic verifiable criterion is one of scope, not of rigor, and closing it is left as the natural continuation of this work.

\section{Conclusion}
\label{sec:conclusion}

This paper developed a contraction theory for holomorphic dynamical systems through the infinitesimal Kobayashi metric. Contraction was defined as an upper Dini-derivative inequality on this intrinsic metric, shown via a fully proved Dini comparison lemma to imply exponential Kobayashi-distance contraction --- a purely intrinsic result obtained by specializing the classical Finsler-metric contraction mechanism of \cite{forni2014differential} to the case in which the Finsler structure is the Kobayashi metric itself. Since this condition is not directly checkable against a vector field, a practical criterion was developed through a smooth Hermitian metric on a forward-invariant compact set --- the direct complex-Hermitian analogue, via the substitution of the Hermitian conjugate for the transpose, of the standard real-valued contraction condition of \cite{lohmiller1998contraction} --- the two related by explicit equivalence constants rather than assertion. Building on this machinery, a Nagumo-type invariance result gave a verifiable route to constructing such a compact set for Laplacian-coupled networks, a construction not previously available for this class of systems, and the framework was extended to feedback-controlled systems, with closed-loop variational dynamics derived carefully enough to retain an easily-dropped term. Consequences for equilibria and periodic orbits followed directly from the intrinsic contraction property.

The numerical study was designed to test the theory rather than illustrate it: forward-invariance and Hermitian-contraction hypotheses were verified analytically before simulation, and the guaranteed rate was compared against observed rates rather than assumed confirmed. This uncovered a precisely quantified gap between the node-wise guaranteed rate and the network's actual synchronization rate, traced to the graph Laplacian's spectral gap and matched in closed form --- the paper's most significant empirical finding, treated as a specific target for future work rather than a loose end.

Two limitations were stated explicitly. The verifiable condition depends on an auxiliary Hermitian metric and is not intrinsic in the strict sense motivating the Kobayashi-metric approach; an earlier disc-transport attempt at a genuinely intrinsic condition failed, with the obstruction identified precisely enough to indicate what a successful version would need to control. The invariance result is specific to disc convexity and does not extend automatically beyond it.

Future work follows three directions: a network-aware condition incorporating the Laplacian spectral gap, the most immediate and best-motivated of the three given the closed-form agreement already observed numerically; a genuinely auxiliary-metric-free sufficient condition addressing the obstruction of Section~\ref{sec:discussion}; and extension beyond convex disc domains. The results connect contraction theory to intrinsic complex geometry through a combination of a classical comparison mechanism specialized to a new setting and a genuinely new invariance construction for coupled holomorphic networks, rigorous within a scope drawn as precisely as the present analysis allows.

\section*{Acknowledgments}
This work was partially supported by the project “Increasing the Knowledge Intensity of Ida-Viru Entrepreneurship,” co-funded by the European Union. The authors declare that the research was conducted impartially and without any commercial influence.

\bibliographystyle{elsarticle-harv} 
\bibliography{references} 

\end{document}